\newtheorem{thrm}{Theorem}[section]
\theoremstyle{definition}
\newtheorem{cor}[thrm]{Corollary}
\newtheorem{lmm}[thrm]{Lemma}
\newtheorem{propo}[thrm]{Proposition}
\newtheorem{defn}[thrm]{Definition}
\begin{document}

\title[A generalized characterization of algorithmic probability]{A generalized characterization \\ of algorithmic probability}
\author{Tom F. Sterkenburg}
\date{\today}
\address{Algorithms \& Complexity Group, Centrum Wiskunde \& Informatica, Amsterdam; Faculty of Philosophy, University of Groningen}
\thanks{This research was supported by \textsc{NWO} Vici project 639.073.904. I am grateful to Alexander Shen for valuable comments on an earlier version of this paper, to Peter Gr\"unwald, Jan Leike, and Dani\"el Noom for helpful discussions, and to Jeanne Peijnenburg for the question that initiated this work.}

\email{tom@cwi.nl}

\begin{abstract}
An a priori semimeasure (also known as ``algorithmic probability'' or ``the Solomonoff prior'' in the context of inductive inference) is defined as the transformation, by a given universal monotone Turing machine, of the uniform measure on the infinite strings. It is shown in this paper that the class of a priori semimeasures can equivalently be defined as the class of transformations, by all compatible universal monotone Turing machines, of any continuous computable measure in place of the uniform measure. Some consideration is given to possible implications for the prevalent association of algorithmic probability with certain foundational statistical principles. 
\end{abstract}

\maketitle

\section{Introduction}\label{sec:intro}
Levin \cite{ZvoLev70rms} first considered the transformation of the uniform measure $\lambda$ on the infinite bit strings by a universal monotone machine $U$. This transformation $\lambda_U$ is the function that for each finite bit string returns the probability that the string is generated by machine $U$, when $U$ is supplied a stream of uniformly random input (produced by tossing a fair coin, say). Levin attached to $\lambda_U$ the interpretation of an ``a priori probability'' distribution, because $\lambda_U$ dominates every other semicomputable semimeasure and so the initial assumption that a sequence is randomly generated from $\lambda_U$ is in an exact sense the weakest of randomness assumptions. 

Earlier on, Solomonoff \cite{Sol64ftii} described in a somewhat less precise way a very similar definition. His motivation was an ``a priori probability'' distribution to serve as an objective starting point in inductive inference. In this context the definition is known under various headers, including ``the Solomonoff prior'' and ``algorithmic probability;'' and it has been associated with certain foundational principles from statistics, to explain or support its merits as an idealized inductive method. %(Cf.\ \cite{Sol97jcss,LiVit08kca,Hut07tcs}.)

As commonly presented, however, the association with two main such principles (firstly, the principle of \emph{indifference}, and secondly, the principle of \emph{Occam's razor}) seems to essentially rest on the definition of $\lambda_U$ as a universal transformation of the \emph{uniform measure} $\lambda$. 

This raises the question whether the \emph{a priori semimeasures} (as we will call the functions $\lambda_U$ here) must be defined, as they always are, as the universal transformations of the uniform measure, or that the a priori semimeasures can equivalently be defined as universal transformations of other computable measures.

The main result of this paper is that any a priori semimeasure can indeed be obtained as a universal transformation of \emph{any} continuous computable measure. That is, for any continuous computable measure, an a priori semimeasure can equivalently be defined as giving the probabilities for finite strings being generated by a universal machine that is presented with a stream of bits sampled from \emph{this} measure. More precisely, for any continuous computable measure $\mu$, it is shown that the class of functions $\lambda_U$ for all universal monotone machines $U$ coincides with the class of functions $\mu_U$ (i.e., the transformation by $U$ of $\mu$) for all ($\mu$-compatible) universal machines $U$.

This work will be done in Section \ref{sec:contap}. First, in the current section, we cover basic notions and notation (Subsection \ref{subsec:nota}), discuss the characterization of the semicomputable semimeasures as the transformations via monotone machines of a continuous computable measure (Subsection \ref{subsec:contsemi}), and the analogous characterization for  semicomputable discrete semimeasures and prefix-free machines (Subsection \ref{subsec:discrsemi}).

\subsection{Basic notions and notation}\label{subsec:nota} 
%Let us first briefly cover some basic notions and notation.

\subsubsection*{Bit strings} Let $\mathbb{B}:=\{ 0,1\}$ denote the set of bits; $\mathbb{B}^*$ the set of all finite bit strings; $\mathbb{B}^n$ the set of bit strings $\sigma$ of length $|\sigma|=n$; $\mathbb{B}^{\leq n}$ the set of bit strings $\sigma$ of length $|\sigma|\leq n$; $\mathbb{B}^\omega$ the class of all infinite bit strings. The empty string is $\epsilon$. The concatenation of bit strings $\sigma$ and $\tau$ is written $\sigma\tau$; we write $\sigma \preccurlyeq \tau$ if $\sigma$ is an \emph{initial segment} of $\tau$ (so there is a $\rho$ such that $\sigma\rho = \tau$; we write $\sigma \prec \tau$ if $\rho \neq \epsilon$). The initial segment of $\sigma$ of length $n \leq |\sigma|$ is denoted $\sigma \upharpoonright_n$; the initial segment $\sigma \upharpoonright_{|\sigma|-1}$ is denoted $\sigma^-$. Strings $\sigma$ and $\tau$ are \emph{comparable}, $\sigma \sim \tau$, if $\sigma \preccurlyeq \tau$ or $\tau \prec \sigma$; if $\sigma$ and $\tau$ are not comparable we write $\sigma \mid \tau$. 

For given finite string $\sigma$, the class $\llbracket \sigma \rrbracket := \{ \sigma X : X \in \mathbb{B}^\omega\} \subseteq \mathbb{B}^\omega$ is the class of infinite extensions of $\sigma$. Likewise, for $A \subseteq \mathbb{B}^*$, let $\llbracket A \rrbracket := \{ \sigma X: \sigma \in A, X \in \mathbb{B}^\omega\}$.

\subsubsection*{Computable measures}
A probability measure over the infinite strings is generated by a \emph{premeasure}, a function $m: \mathbb{B}^* \rightarrow [0,1]$ that satisfies
\begin{enumerate}
\item $m(\epsilon) = 1$;
\item $m(\sigma0)+m(\sigma1 )= m(\sigma)$ for all $\sigma \in \mathbb{B}^*$.
\end{enumerate}
A premeasure $m$ gives rise to an \emph{outer measure} $\mu^*_m:\mathcal{P}(\mathbb{B}^\omega) \rightarrow [0,1]$ by 
$$\mu^*_m(\mathcal{A})= \inf \left\{ \sum_{\sigma \in A} m(\sigma): \mathcal{A} \subseteq \llbracket A \rrbracket \right\}.$$
By restricting $\mu^*_m$ to the $\mu$-measurable sets, i.e., the sets $\mathcal{A}\subseteq \mathbb{B}^\omega$ such that $\mu^*(\mathcal{B})=\mu^*(\mathcal{B} \cap \mathcal{A} )+\mu^*(\mathcal{B} \setminus \mathcal{A})$ for all $\mathcal{B} \subseteq \mathbb{B}^\omega$, we finally obtain the corresponding \emph{(probability) measure} $\mu_m$, that satisfies $\mu_m(\llbracket \sigma \rrbracket)=m(\sigma)$ for all $\sigma \in \mathbb{B}^*$. 

%Then the corresponding \emph{(probability) measure} $\mu_m$ satisfies $\mu_m(\llbracket \sigma \rrbracket)=m(\sigma)$ for each $\sigma \in \mathbb{B}^*$. 
The \emph{uniform (Lebesgue) measure} $\lambda$ is given by the premeasure $m$ with $m(\sigma)=2^{-|\sigma|}$ for all $\sigma \in \mathbb{B}^*$. A measure $\mu$ is \emph{nonatomic} or \emph{continuous} if there is no $X \in \mathbb{B}^\omega$ with $\mu(\{X\})>0$.

We call a total real-valued function $f: \mathbb{B}^* \rightarrow \mathbb{R}$ \emph{computable} if its values are uniformly computable reals: there is a computable $g: \mathbb{B}^* \times \mathbb{N} \rightarrow \mathbb{Q}$ such that $|g(\sigma,k)-f(\sigma)|<2^{-k} $ for all $\sigma,k$. This allows us to talk about computable premeasures. A measure $\mu$ we then call computable if $\mu=\mu^*_m$ for a computable premeasure $m$.

\subsubsection*{Semicomputable semimeasures}
We call a total real-valued function $f: \mathbb{B}^* \rightarrow \mathbb{R}$ (\emph{lower}) \emph{semicomputable} if there are uniformly computable functions $f_t: \mathbb{B}^* \rightarrow \mathbb{Q}$ such that  for all $\sigma \in \mathbb{B}^*$, we have $f_{t+1}(\sigma)\geq f_t(\sigma)$ for all $t \in \mathbb{N}$ and $\lim_{t \rightarrow \infty} f_t(\sigma)=f(\sigma)$.

Levin \cite[Definition 3.6]{ZvoLev70rms} introduced the notion of a semicomputable measure over the collection $\mathbb{B}^* \cup \mathbb{B}^\omega$ of finite and infinite strings. This is equivalent to a \emph{semimeasure} over the infinite strings that is generated from a premeasure $m$ that only needs to satisfy
\begin{enumerate}
\item $m(\epsilon) \leq 1$;
\item $m(\sigma0)+m(\sigma1 ) \leq m(\sigma)$ for all $\sigma \in \mathbb{B}^*$.
\end{enumerate}

Following \cite{DowHir10}, we will simply treat a semimeasure as a function over the cones $\{ \llbracket \sigma \rrbracket: \sigma \in \mathbb{B}^* \}$:
%TOCS \begin{definition}
\begin{defn}
A \emph{semicomputable semimeasure} is a function $\nu: \{ \llbracket \sigma \rrbracket: \sigma \in \mathbb{B}^* \} \rightarrow [0,1]$ such that $\nu(\llbracket \cdot \rrbracket):\mathbb{B}^* \rightarrow [0,1]$ is semicomputable, and
\begin{enumerate}
\item $\nu(\llbracket \epsilon \rrbracket)\leq 1$;
\item $\nu(\llbracket \sigma0 \rrbracket)+\nu(\llbracket \sigma1 \rrbracket)\leq \nu(\llbracket \sigma \rrbracket)$ for all $\sigma \in \mathbb{B}^*$.
\end{enumerate}
%TOCS \end{definition}
\end{defn}

Moreover, we follow the custom of writing $\nu(\sigma)$ for $\nu(\llbracket \sigma \rrbracket)$. Let $\mathcal{M}$ denote the class of all semicomputable semimeasures.\footnote{Semimeasures as defined here are often referred to as \emph{continuous} semimeasures, in contradistinction to the \emph{discrete} semimeasures defined in Subsection \ref{subsec:discrsemi} below (cf.\ \cite{LiVit08,DowHir10}). Due to the possibility of confusion with the earlier meaning of ``continuous'' as synonymous to ``nonatomic,'' we will avoid this usage here.}

\subsection{Monotone machines and semicomputable semimeasures}\label{subsec:contsemi}

\subsubsection*{Machines}
The following definition is due to Levin \cite{Lev73smd}. (Similar machine models were already described in \cite{ZvoLev70rms}, and by Solomonoff \cite{Sol64ftii} and Schnorr \cite{Sch73jcss}; see \cite{Day09apal}.) %(See \cite[336]{LiVit08kca}; \cite[145-6]{DowHir10arc}; \cite{Day09cprs}.)

%TOCS \begin{definition}
\begin{defn}
A \emph{monotone machine} is a c.e.\ set $M \subseteq \mathbb{B}^*\times \mathbb{B}^*$ of pairs of strings such that if $(\rho_1,\sigma_1),(\rho_2,\sigma_2)\in M$ and $\rho_1 \preccurlyeq \rho_2$ then $\sigma_1 \sim \sigma_2$. 
%TOCS \end{definition}
\end{defn}

We will not go into the concrete machine model that corresponds to the above abstract definition (see, for instance, \cite[p.\ 145]{DowHir10}); we only note that a machine $M$ as defined above induces a function $N_M: \mathbb{B}^* \cup \mathbb{B}^\omega \rightarrow \mathbb{B}^* \cup \mathbb{B}^\omega$ by $N_M(X) = \sup_{\preccurlyeq}\{ \sigma \in \mathbb{B}^*: \exists \rho \preccurlyeq X \left( (\rho,\sigma) \in M \right) \}$ (cf.\ \cite{Gac08unpubl}).

\subsubsection*{Transformations}
Imagine that we feed a monotone machine $M$ a stream of input that is generated from a computable measure $\mu$. As a result, machine $M$ produces a (finite or infinite) stream of output. The probabilities for the possible initial segments of the output stream are themselves given by a semicomputable semimeasure (as can easily be verified). We will call this semimeasure the \emph{transformation} of $\mu$ by $M$.

%TOCS \begin{definition}
\begin{defn}
The \emph{transformation $\mu_M$ of computable measure $\mu$ by monotone machine $M$} is defined by 
$$\mu_M(\sigma):= \mu(\llbracket \{ \rho: \exists \sigma' \succcurlyeq \sigma ((\rho,\sigma') \in M ) \} \rrbracket ).$$   
%TOCS \end{definition}
\end{defn}

\subsubsection*{Characterizations of $\mathcal{M}$}
For every given semicomputable semimeasure $\nu$, one can obtain a machine $M$ that transforms the uniform measure $\lambda$ to $\nu$. Together with the straightforward converse that every function $\lambda_M$ defines a semicomputable semimeasure, this gives a characterization of the class $\mathcal{M}$ of semicomputable semimeasures as

\begin{equation}\label{eq:charuni}
\mathcal{M}=\{ \lambda_M \}_M,
\end{equation}
where $\{ \lambda_M \}_M$ is the class of functions $\lambda_M$ for all monotone machines $M$. 

A proof of this fact by a construction of an $M$ that transforms $\lambda$ to given $\nu$ was first outlined by Levin in \cite[Theorem 3.2]{ZvoLev70rms}. (Also see \cite[Theorem 4.5.2]{LiVit08}.) Moreover, it can be deduced from \cite[Theorem 3.1(b), 3.2]{ZvoLev70rms} that $\mathcal{M}$ can be characterized as the class of transformations of computable measures other than $\lambda$.  Namely, we have that $ \mathcal{M}$ coincides with $\{ \mu_M \}_M$ for any computable $\mu$ that is continuous.

A detailed construction to prove the characterization \eqref{eq:charuni} was published by Day \cite[Theorem 4(ii)]{Day11tams}. (Also see \cite[Theorem 3.16.2(ii)]{DowHir10}.) The following proof of the case for any continuous computable measure is an adaptation of this construction.

%TOCS \begin{theorem}[Levin]\label{thrm:gen}
\begin{thrm}[Levin]\label{thrm:gen}
For every continuous computable measure $\mu$, there is for every semicomputable semimeasure $\nu$ a monotone machine $M$ such that $\nu=\mu_M$.
%TOCS \end{theorem}
\end{thrm}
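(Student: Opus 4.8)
The plan is to reduce the claim to the already-settled case of the uniform measure. By the characterization \eqref{eq:charuni} there is a monotone machine $M_\nu$ with $\lambda_{M_\nu}=\nu$, so it suffices to construct a single monotone machine $V$, depending only on $\mu$, that transforms $\mu$ into $\lambda$ (i.e.\ $\mu_V=\lambda$), and then to take $M:=M_\nu\circ V$ and verify $\mu_M=(\mu_V)_{M_\nu}=\lambda_{M_\nu}=\nu$.

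To build $V$, identify $\mathbb{B}^\omega$ with $[0,1]$ through binary expansions and let $F\colon[0,1]\to[0,1]$, $F(x)=\mu([0,x])$, be the distribution function of $\mu$; on dyadic points $F(0.\sigma)=\sum\{m(\tau):|\tau|=|\sigma|,\ \tau<_{\mathrm{lex}}\sigma\}$, where $m$ is a computable premeasure for $\mu$. Since $\mu$ is nonatomic, $F$ is continuous and nondecreasing, its pushforward $F_*\mu$ is Lebesgue measure on $[0,1]$ (the probability integral transform), and $m(\sigma)=\mu(\llbracket\sigma\rrbracket)\to 0$ along every branch. The machine $V$ then works as follows: having read input $\sigma$ it knows the input point lies in the $|\sigma|$-th dyadic interval $I_\sigma$, whose $F$-image is an interval of $\mu$-length $m(\sigma)$; once nested rational lower/upper approximations to the two endpoints of $F(I_\sigma)$ certify that $F(I_\sigma)$ lies inside some dyadic interval $I_\tau$, the machine may emit $\tau$. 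Because $m(\sigma)\to 0$ these image intervals shrink to the point $F(X)$, so along $\mu$-almost every branch output is produced to every length. One then checks that $V$ is a c.e.\ set of pairs and is monotone — comparable inputs give nested, hence comparable, output intervals — and that $\mu_V=\lambda$, since the only branches on which $V$ misbehaves lie in $F^{-1}(\text{dyadic rationals})$, which is $\mu$-null because $F_*\mu=\lambda$.

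Finally one sets $M_\nu\circ V:=\{(\rho,\tau):\exists\pi\,[(\rho,\pi)\in V\ \text{and}\ (\pi,\tau)\in M_\nu]\}$, checks it is again a monotone machine (using monotonicity of $V$ and of $M_\nu$, together with the fact that feeding more input to a monotone machine yields comparable-or-more output, so $N_{M_\nu}$ is $\preccurlyeq$-monotone), and checks the composition identity $\mu_{M_\nu\circ V}=(\mu_V)_{M_\nu}$: passing $\mu$-distributed bits through $V$ produces bits distributed according to $\mu_V=\lambda$, which $M_\nu$ then turns into $\nu$. This identity is intuitively transparent, but unwinding it formally from the definitions of $N_M$ and $\mu_M$ is a routine, slightly tedious bookkeeping step.

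The step I expect to be the real obstacle is making $V$ exactly right rather than merely approximately so. The construction converges only because $\mu$ is nonatomic, and two things must be handled with care: the cone-measures $m(\sigma)$ are computable reals, not rationals, so a pair $(\sigma,\tau)$ may enter $V$ only once nested rational approximations conclusively place $F(I_\sigma)$ inside $I_\tau$; and at the (countably many, $\mu$-null) dyadic boundary points the binary expansion is ambiguous and $F$ may be locally constant, so one must argue these contribute nothing to $\mu_V$. A self-contained alternative, following Day's construction more literally, is to build $M$ directly in stages from a lower approximation $\nu_t\nearrow\nu$, assigning to each output cone $\sigma$ a growing finite union of input cones whose total $\mu$-measure tends to $\nu(\sigma)$, nested compatibly for $\sigma,\sigma0,\sigma1$; there nonatomicity of $\mu$ is exactly what guarantees that any amount of measure up to the room still available can be sub-allocated as a limit of finitely many cones, and the computable-reals issue resurfaces as the reason $\nu(\sigma)$ is attained only in the limit.
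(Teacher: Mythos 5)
Your argument is correct, but it takes a genuinely different route from the paper. The paper proves the theorem directly, by adapting Day's stage-by-stage construction for the uniform case: at stage $s=\langle\sigma,t\rangle$ it adds to $D_s(\sigma)$ available input strings of length $s$ whose total $\mu$-measure pushes $\mu(\llbracket D_s(\sigma)\rrbracket)$ up towards $f_t(\sigma)$, and continuity of $\mu$ enters solely through the fact that $\max_{\rho\in\mathbb{B}^{s}}\mu(\llbracket\rho\rrbracket)\to 0$, so the granularity of the allocation becomes arbitrarily fine. You instead factor the problem: take the $\lambda$-case \eqref{eq:charuni} as a black box (legitimate, since the paper itself treats it as an established result of Levin and Day) and build one machine $V$, depending only on $\mu$, realizing the probability integral transform $F$, so that $\mu_V=\lambda$ and $M:=M_\nu\circ V$ works for every $\nu$ simultaneously. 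This is conceptually cleaner and makes visible exactly where nonatomicity is used (continuity of $F$ for $F_*\mu=\lambda$, and $m(X\upharpoonright_n)\to 0$ for $V$ to produce unbounded output on $\mu$-almost every branch); it also yields the slightly stronger observation that a single ``decoder'' witnesses the whole reduction. The price is that the two steps you flag as bookkeeping do carry real content and must be done with the care you indicate: $V$ should consist of exactly the pairs $(\sigma,\tau)$ for which strict rational inequalities certify $F(I_\sigma)\subseteq\mathrm{int}(I_\tau)$ (this makes $V$ c.e.\ and monotone, since two open dyadic intervals that meet are nested, and confines all failures to the $\mu$-null set $F^{-1}(\text{dyadic rationals})$); and the composition identity $\mu_{M_\nu\circ V}=\lambda_{M_\nu}$ is not a purely formal consequence of the definitions --- it holds here because $\mu_V$ is an honest measure (equivalently, $V$ is total off a $\mu$-null set), via the exact set identity $\llbracket\{\rho:\exists\pi\,[(\rho,\pi)\in V\wedge\pi\in B_\tau]\}\rrbracket=F^{-1}\bigl(\bigcup_{\pi\in B_\tau}\mathrm{int}(I_\pi)\bigr)$ with $B_\tau$ the set of $M_\nu$-descriptions of extensions of $\tau$, whereas for a general intermediate machine whose transformation is a deficient semimeasure the analogous identity can fail. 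One further remark: the paper reuses its construction later in recursion-theoretic arguments that need an index for $M$ computed uniformly from (an approximation index for) $\nu$ and from $\mu$; your $M_\nu\circ V$ is equally uniform given the uniformity of the $\lambda$-case, so nothing downstream is lost.
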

%TOCS \begin{proof}
\begin{proof}\renewcommand{\qedsymbol}{}
Let $\nu$ be any semicomputable semimeasure, with uniformly computable approximation functions $f_t$. We construct in stages $s= \langle \sigma, t \rangle$ a monotone machine $M$ that transforms $\mu$ into $\nu$. Let $D_s(\sigma):=\{ \rho \in \mathbb{B}^* : ( \rho,\sigma ) \in M_s \}$.
\end{proof}
%TOCS \begin{constr}
\begin{proof}[Construction]\renewcommand{\qedsymbol}{}
Let $M_0 := \emptyset$.

At stage $s= \langle \sigma, t \rangle$, if $\mu(\llbracket D_{s-1}(\sigma)\rrbracket)=f_t(\sigma)$ then let $M_s := M_{s-1}$.

Otherwise, first consider the case $\sigma \neq \epsilon$. By Lemma 1 in \cite{Day11tams} there is a set $R \subseteq \mathbb{B}^s$ of \emph{available} strings of length $s$ such that $\llbracket  R\rrbracket=\llbracket D_{s-1}(\sigma^-)\rrbracket \setminus (\llbracket D_{s-1}(\sigma^- 0)\rrbracket \cup \llbracket D_{s-1}( \sigma^- 1)\rrbracket)$. Denote $x:=\mu(\llbracket R \rrbracket)$, the amount of measure available for descriptions for $\sigma$, which equals $\mu(\llbracket D_{s-1}(\sigma^-)\rrbracket) - \mu(\llbracket D_{s-1}(\sigma^- 0)\rrbracket)- \mu(\llbracket D_{s-1}(\sigma^- 1)\rrbracket)$ because we ensure by construction that $\llbracket D_{s-1}(\sigma^-)\rrbracket \supseteq\llbracket D_{s-1}(\sigma^- 0)\rrbracket \cup \llbracket D_{s-1}(\sigma^- 1)\rrbracket$ and $\llbracket D_{s-1}(\sigma^- 0)\rrbracket \cap \llbracket D_{s-1}(\sigma^- 1)\rrbracket = \emptyset$. Denote $y:= f_t(\sigma)-\mu(\llbracket D_{s-1}(\sigma)\rrbracket)$, the amount of measure the current descriptions fall short of the latest approximation of $\nu( \sigma)$. We collect in the auxiliary set $A_s$ a number of available strings from $R$ such that $\mu(\llbracket A_s\rrbracket)$ is maximal while still bounded by $\min\{x,y\}$. 

If $\sigma = \epsilon$, then denote $y:= f_t(\epsilon)-\mu(\llbracket D_{s-1}(\epsilon)\rrbracket)$. Collect in $A_s$ a number of available strings from $R \subseteq \mathbb{B}^s$ with $\llbracket R\rrbracket=\mathbb{B}^\omega \setminus \llbracket D_{s-1}(\epsilon)\rrbracket$ such that $\mu(\llbracket A_s\rrbracket)$ is maximal but bounded by $y$. 

Put $M_s:= M_{s-1} \cup \{ ( \rho, \sigma ) : \rho \in A_s \}$.
%TOCS\end{constr}
\end{proof}

%TOCS \begin{veri}
\begin{proof}[Verification]
The verification of the fact that $M$ is a monotone machine is identical to that in \cite{Day11tams}. 

It remains to prove that $\mu_M(\sigma) = \nu(\sigma )$ for all $\sigma \in \mathbb{B}^*$. Since by construction $\llbracket D_s(\sigma') \rrbracket \subseteq \llbracket D_s(\sigma) \rrbracket$ for any $\sigma' \succcurlyeq \sigma$, we have that $\mu_{M_s}(\sigma) =  \mu(\cup_{\sigma' \succcurlyeq \sigma} \llbracket D_s(\sigma') \rrbracket) =  \mu(\llbracket D_s(\sigma) \rrbracket)$. Hence $\mu_M(\sigma) = \lim_{s \rightarrow \infty} \mu(\llbracket D_s(\sigma) \rrbracket)$, and our objective is to show that $\lim_{s \rightarrow \infty} \mu(\llbracket D_s(\sigma)\rrbracket)=\nu( \sigma )$. To that end it suffices to demonstrate that for every $\delta>0$ there is some stage $s_0$ where $\mu(\llbracket D_{s_0}(\sigma)\rrbracket)>\nu( \sigma ) - \delta$. We prove this by induction.

For the base step, let $\sigma=\epsilon$. Choose positive $\delta' < \delta$. There will be a stage $s_0=\langle \epsilon,t_0 \rangle$ where $f_{t_0}(\epsilon)>\nu(\epsilon)-\delta'$, and (since $\mu$ is continuous) $\mu(\llbracket \rho\rrbracket)\leq \delta - \delta'$ for all $\rho \in \mathbb{B}^{s_0}$. Then, if not already $\mu(\llbracket D_{s_0-1}(\epsilon)\rrbracket)> \nu(\epsilon)-\delta$, the latter guarantees that the construction will select a number of available strings in $A_{s_0}$ such that $\nu(\epsilon)-\delta < \mu(\llbracket D_{s_0-1}(\epsilon)\rrbracket)+ \mu(\llbracket A_s \rrbracket) \leq f_{t_0}(\epsilon)$. It follows that  $\mu(\llbracket D_{s_0}(\epsilon)\rrbracket)= \mu(\llbracket D_{s_0-1}(\epsilon)\rrbracket)+\mu(\llbracket A_s \rrbracket)> \nu(\epsilon)-\delta$ as required.
 
For the inductive step, let $\sigma \neq \epsilon$, and denote by $\sigma'$ the one-bit extension of $\sigma^-$ with $\sigma' \mid \sigma$. Choose positive $\delta' < \delta$. By induction hypothesis, there exists a stage $s_0'$ such that $\mu(\llbracket D_{s_0'}(\sigma^-)\rrbracket)> \nu( \sigma^-)-\delta'$. At this stage $s_0'$, we have 

\begin{align*}
\mu(\llbracket D_{s_0'}(\sigma^-)\rrbracket)-\mu(\llbracket D_{s_0'}(\sigma')\rrbracket) &\geq \mu(\llbracket D_{s_0'}(\sigma^-)\rrbracket-\nu(\sigma')
\\ &> \nu(\sigma^-)-\delta'-\nu(\sigma') 
\\ &\geq \nu(\sigma)-\delta',
\end{align*}
where the last inequality follows from the semimeasure property $\nu(\sigma^-)\geq \nu(\sigma)+\nu(\sigma')$. There will be a stage $s_0=\langle \sigma,t_0 \rangle \geq s_0'$ with $f_{t_0}(\sigma)>\nu(\sigma)-\delta'$ and $\mu(\llbracket \rho\rrbracket)\leq \delta - \delta'$ for all $\rho \in \mathbb{B}^{s_0}$. Clearly, $\min \{\mu(\llbracket D_{s_0}(\sigma^-)\rrbracket)-\mu(\llbracket D_{s_0}(\sigma')\rrbracket),f_{t_0}(\sigma) \} > \nu(\sigma)-\delta'$. Then, as in the base case, if not already $\mu(\llbracket D_{s_0-1}(\sigma)\rrbracket)> \nu(\sigma)-\delta$, the construction selects a number of available descriptions such that $\mu(\llbracket D_{s_0}(\sigma)\rrbracket)> \nu(\sigma)-\delta$ as required. %TOCS \qed
%TOCS \end{veri}
\end{proof}

%TOCS \begin{corollary}\label{cor:charcont}
\begin{cor}\label{cor:charcont}
For every continuous computable measure $\mu$,
$$\{ \mu_M \}_M = \mathcal{M} .$$
%TOCS \end{corollary}
\end{cor}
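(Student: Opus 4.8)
The plan is to prove the two inclusions separately. The inclusion $\mathcal{M} \subseteq \{\mu_M\}_M$ is already delivered by Theorem \ref{thrm:gen}: given $\nu \in \mathcal{M}$, the monotone machine $M$ constructed there satisfies $\nu = \mu_M$, hence $\nu \in \{\mu_M\}_M$. So the only thing left is the reverse inclusion $\{\mu_M\}_M \subseteq \mathcal{M}$, i.e.\ that for an arbitrary monotone machine $M$ the function $\mu_M$ is a semicomputable semimeasure --- the fact flagged above as ``easily verified.''

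So I would fix a monotone machine $M$ and abbreviate $A_\sigma := \{\rho \in \mathbb{B}^* : \exists \sigma' \succcurlyeq \sigma\,((\rho,\sigma') \in M)\}$, so that $\mu_M(\sigma) = \mu(\llbracket A_\sigma\rrbracket)$. The bound $\mu_M(\epsilon)\leq 1$ is immediate from $\mu(\mathbb{B}^\omega)=1$. For the semimeasure inequality at $\sigma$ I would observe that $A_{\sigma 0}, A_{\sigma 1} \subseteq A_\sigma$ (any $\rho$ witnessing membership in $A_{\sigma i}$ through some $\sigma' \succcurlyeq \sigma i$ also witnesses $\rho \in A_\sigma$), and that $\llbracket A_{\sigma 0}\rrbracket \cap \llbracket A_{\sigma 1}\rrbracket = \emptyset$: an $X$ in both would yield $\rho_0,\rho_1 \preccurlyeq X$ and $\sigma'_0 \succcurlyeq \sigma 0$, $\sigma'_1 \succcurlyeq \sigma 1$ with $(\rho_0,\sigma'_0),(\rho_1,\sigma'_1)\in M$, and since $\rho_0,\rho_1$ are comparable --- say $\rho_0 \preccurlyeq \rho_1$ --- the defining condition on monotone machines would force $\sigma'_0 \sim \sigma'_1$, contradicting $\sigma'_0 \mid \sigma'_1$. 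Additivity of $\mu$ then gives
$$\mu_M(\sigma 0) + \mu_M(\sigma 1) = \mu\bigl(\llbracket A_{\sigma 0}\rrbracket \cup \llbracket A_{\sigma 1}\rrbracket\bigr) \leq \mu\bigl(\llbracket A_\sigma\rrbracket\bigr) = \mu_M(\sigma).$$

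For semicomputability I would write $M = \bigcup_t M_t$ as an increasing union of finite sets from a c.e.\ enumeration of $M$, put $A^t_\sigma := \{\rho : \exists \sigma' \succcurlyeq \sigma\,((\rho,\sigma')\in M_t)\}$ (a finite set, computable uniformly in $\sigma,t$), and note that the value $\mu(\llbracket A^t_\sigma\rrbracket)$ of the computable measure $\mu$ on a finite union of basic clopen sets is a computable real, uniformly in $\sigma,t$, with $\bigl(\mu(\llbracket A^t_\sigma\rrbracket)\bigr)_t$ nondecreasing and tending to $\mu_M(\sigma)$. A standard diagonalization over the rational approximations then produces uniformly computable $f_t : \mathbb{B}^* \to \mathbb{Q}$ with $f_t(\sigma) \nearrow \mu_M(\sigma)$, so $\mu_M(\llbracket \cdot\rrbracket)$ is lower semicomputable and $\mu_M \in \mathcal{M}$.

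I do not expect a genuine obstacle here: the substance of the corollary is all carried by Theorem \ref{thrm:gen}. The only steps that need a little care are the disjointness $\llbracket A_{\sigma 0}\rrbracket \cap \llbracket A_{\sigma 1}\rrbracket = \emptyset$ --- which is precisely where the monotone-machine condition is used --- and the minor bookkeeping converting the monotone computable-real approximations $\mu(\llbracket A^t_\sigma\rrbracket)$ into the rational-valued witnesses $f_t$ required by the definition of lower semicomputability.
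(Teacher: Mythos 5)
Your proposal is correct and follows the same route the paper intends: the inclusion $\mathcal{M}\subseteq\{\mu_M\}_M$ is exactly Theorem \ref{thrm:gen}, and the reverse inclusion is the fact the paper dismisses as ``easily verified'' when defining transformations, which you have correctly worked out (the disjointness of $\llbracket A_{\sigma 0}\rrbracket$ and $\llbracket A_{\sigma 1}\rrbracket$ via the monotonicity condition, and the rational-approximation bookkeeping for lower semicomputability, are precisely the points that need checking). No gap.
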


\subsection{Prefix-free machines and discrete semimeasures}\label{subsec:discrsemi}
The notions of a semicomputable \emph{discrete} semimeasure on the finite strings and a \emph{prefix-free} machine can be traced back to Levin \cite{Lev74pit} and G\'acs \cite{Gac74smd}, and independently Chaitin \cite{Cha75jacm}.

%TOCS\begin{definition}
\begin{defn}
A \emph{semicomputable discrete semimeasure} is a semicomputable function $P: \mathbb{B}^* \rightarrow \mathbb{R}^{\geq 0}$ such that $\sum_{\sigma \in \mathbb{B}^*} P(\sigma) \leq 1$.
\end{defn}
%TOCS\end{definition}

%TOCS\begin{definition}
\begin{defn}
A \emph{prefix-free machine} is a partial computable function $T:\mathbb{B}^* \rightarrow \mathbb{B}^*$ with prefix-free domain.
\end{defn}
%TOCS\end{definition}

%TOCS\begin{definition}
\begin{defn}
The \emph{transformation of computable measure $\mu$ by prefix-free machine $T$} is the semicomputable discrete semimeasure $Q^\mu_T: \mathbb{B}^* \rightarrow [0,1]$ defined by 
$$Q^\mu_T(\sigma):= \mu(\llbracket \{ \rho: (\rho,\sigma) \in T \} \rrbracket).$$ 
\end{defn}
%TOCS\end{definition}

Let $\mathcal{P}$ denote the class of all semicomputable discrete semimeasures. Analogous to class $\mathcal{M}$ and the monotone machines, class $\mathcal{P}$ is characterized as all prefix-free machine transformations of $\mu$, for any continuous computable $\mu$. The fact that every $P$ can be obtained as a transformation of $\lambda$ is usually inferred from the effective version of Kraft's inequality (e.g., \cite[p.\ 130]{DowHir10},  \cite[Exercise 2.2.23]{Nie09}). However, we can easily prove the general case in a direct manner by a much simplified version of the construction for Theorem \ref{thrm:gen}.

%\begin{proposition}\label{propo:gendiscr}
\begin{propo}\label{propo:gendiscr}
For every continuous computable measure $\mu$, there is for every semicomputable discrete semimeasure $P$ a prefix-free machine $T$ such that $P=Q^\mu_T$.
%\end{proposition}
\end{propo}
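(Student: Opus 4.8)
The plan is to rerun the construction of Theorem~\ref{thrm:gen} in the much simpler prefix-free setting, where the descriptions assigned to distinct strings need only be pairwise incomparable rather than suitably nested. Fix a continuous computable $\mu$ and a semicomputable discrete semimeasure $P$, and take uniformly computable nondecreasing rational approximations $g_t$ with $\lim_t g_t(\sigma)=P(\sigma)$, so that $g_t(\sigma)\le P(\sigma)$ and hence $\sum_\sigma g_t(\sigma)\le\sum_\sigma P(\sigma)\le 1$ for every $t$. I would build $T$ in stages $s=\langle\sigma,t\rangle$, writing $D_s(\sigma):=\{\rho:(\rho,\sigma)\in T_s\}$ for the descriptions of $\sigma$ so far and $U_{s-1}:=\bigcup_{\sigma'}\llbracket D_{s-1}(\sigma')\rrbracket$ for the region of $\mathbb{B}^\omega$ already used. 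Starting from $T_0:=\emptyset$, at stage $s=\langle\sigma,t\rangle$: if $\mu(\llbracket D_{s-1}(\sigma)\rrbracket)\ge g_t(\sigma)$, do nothing; otherwise let $R\subseteq\mathbb{B}^s$ be a set of available strings of length $s$ with $\llbracket R\rrbracket=\mathbb{B}^\omega\setminus U_{s-1}$ (possible exactly as in Theorem~\ref{thrm:gen}, and here not even requiring \cite[Lemma~1]{Day11tams}, since no nesting is imposed), put $y:=g_t(\sigma)-\mu(\llbracket D_{s-1}(\sigma)\rrbracket)>0$, collect in $A_s\subseteq R$ a set with $\mu(\llbracket A_s\rrbracket)$ maximal subject to $\mu(\llbracket A_s\rrbracket)\le y$, and set $T_s:=T_{s-1}\cup(A_s\times\{\sigma\})$.

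I would then verify that $T$ is a prefix-free machine, which is routine: stage numbers increase and at stage $s$ one uses only length-$s$ strings lying outside $U_{s-1}$, so any two strings ever placed in $\mathrm{dom}(T)$ are incomparable (two from the same stage share a length; of two from stages $s_1<s_2$, the later one cannot extend the earlier, as it avoids $U_{s_1}$), and no string is used twice; hence $\mathrm{dom}(T)$ is prefix-free and $T$ is a well-defined partial computable function.

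For $Q^\mu_T=P$: since the $D_s(\sigma)$ increase, $Q^\mu_T(\sigma)=\mu(\bigcup_s\llbracket D_s(\sigma)\rrbracket)=\lim_s\mu(\llbracket D_s(\sigma)\rrbracket)$ — note this is immediate here, without the auxiliary identity $\mu_{M_s}(\sigma)=\mu(\llbracket D_s(\sigma)\rrbracket)$ needed in Theorem~\ref{thrm:gen} — so it suffices to show the limit is $P(\sigma)$. The bound $\mu(\llbracket A_s\rrbracket)\le y$ makes $\mu(\llbracket D_s(\sigma)\rrbracket)\le P(\sigma)$ an invariant, giving the upper bound. For the lower bound I would argue as in the base step of Theorem~\ref{thrm:gen}: given $\delta>0$, fix $\delta'\in(0,\delta)$ and, using $g_t(\sigma)\uparrow P(\sigma)$ together with continuity of $\mu$ (so that $\max_{|\rho|=n}\mu(\llbracket\rho\rrbracket)\to 0$), choose $t_0$ with $s_0:=\langle\sigma,t_0\rangle$ so large that $g_{t_0}(\sigma)>P(\sigma)-\delta'$ and $\mu(\llbracket\rho\rrbracket)\le\delta-\delta'$ for all $\rho\in\mathbb{B}^{s_0}$; at stage $s_0$ either $\mu(\llbracket D_{s_0}(\sigma)\rrbracket)$ already exceeds $P(\sigma)-\delta$, or the construction raises it to more than $g_{t_0}(\sigma)-(\delta-\delta')>P(\sigma)-\delta$.

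The one step I expect to need care — and the only place the hypothesis $\sum_\sigma P(\sigma)\le 1$ is used — is checking that at stage $s_0$ there really is enough available measure, i.e.\ that $\mu(\llbracket R\rrbracket)\ge y$. This should follow from $\mu(U_{s_0-1})=\sum_{\sigma'}\mu(\llbracket D_{s_0-1}(\sigma')\rrbracket)\le\mu(\llbracket D_{s_0-1}(\sigma)\rrbracket)+\sum_{\sigma'\neq\sigma}P(\sigma')$, whence $\mu(\llbracket R\rrbracket)=1-\mu(U_{s_0-1})\ge P(\sigma)-\mu(\llbracket D_{s_0-1}(\sigma)\rrbracket)\ge g_{t_0}(\sigma)-\mu(\llbracket D_{s_0-1}(\sigma)\rrbracket)=y$, using $\sum_{\sigma'}P(\sigma')\le 1$ and $g_{t_0}(\sigma)\le P(\sigma)$. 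Everything else is a strict simplification of the proof of Theorem~\ref{thrm:gen}: there is no Day's Lemma to invoke and no induction on $\sigma$, since the strings are now handled entirely independently of one another.
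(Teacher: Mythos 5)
Your proposal is correct and follows essentially the same route as the paper: the same staging $s=\langle\sigma,t\rangle$, the same greedy selection of length-$s$ available strings outside the already-used region bounded by $f_t(\sigma)-\mu(\llbracket D_{s-1}(\sigma)\rrbracket)$, and the same verification, including the key availability estimate $\mu(\llbracket R\rrbracket)\geq f_{t_0}(\sigma)-\mu(\llbracket D_{s_0-1}(\sigma)\rrbracket)$ via $\sum_\tau P(\tau)\leq 1$. The minor presentational differences (triggering on $\geq$ rather than $=$, and spelling out the disjointness and the "within one cylinder of $y$" step) do not change the argument.
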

%TOCS \begin{proof}%\renewcommand{\qedsymbol}{}
\begin{proof}\renewcommand{\qedsymbol}{}
Let $P$ be any semicomputable discrete semimeasure, with uniformly computable approximation functions $f_t$. We construct a prefix-free machine $T$ in stages $s= \langle \sigma, t \rangle$. Let $D_s(\sigma)=\{ \rho \in \mathbb{B}^* : ( \rho,\sigma ) \in T_s \}$. 
\end{proof}
%TOCS \begin{constr}
\begin{proof}[Construction]\renewcommand{\qedsymbol}{}
Let $T_0 = \emptyset$.

At stage $s= \langle \sigma, t \rangle$, if $\mu(\llbracket D_{s-1}(\sigma)\rrbracket)=f_t(\sigma)$ then let $T_s := T_{s-1}$.

Otherwise, let the set $R \subseteq \mathbb{B}^s$ of \emph{available} strings be such that $\llbracket  R\rrbracket= \mathbb{B}^\omega \setminus \llbracket \cup_{\tau \in \mathbb{B}^*} D_{s-1}(\tau) \rrbracket$. Collect in the auxiliary set $A_s$ a number of available strings $\rho$ from $R$ with $\sum_{\rho \in A_s} \mu(\llbracket\rho\rrbracket)$ maximal but bounded by $f_t(\sigma)-\mu(\llbracket D_{s-1}(\sigma)\rrbracket)$, the amount of measure the current descriptions fall short of the latest approximation of $P(\sigma)$. Put $T_s:= T_{s-1} \cup \{ ( \rho, \sigma ) : \rho \in A_s \}$.
%TOCS \end{constr}
\end{proof}

\begin{proof}[Verification]
%TOCS \begin{veri}
It is immediate from the construction that $\cup_{\sigma \in \mathbb{B}^*} D_s(\sigma)$ is prefix-free at all stages $s$, so $T = \lim_{s \rightarrow \infty} T_s$ is a prefix-free machine. To show that $Q^\mu_T(\sigma)=\lim_{s \rightarrow \infty} \mu(\llbracket D_s(\sigma)\rrbracket)$ equals $P(\sigma)$ for all $\sigma \in \mathbb{B}^*$, it suffices to demonstrate that for every $\delta>0$ there is some stage $s_0$ where $\mu(\llbracket D_{s_0}(\sigma)\rrbracket)>P(\sigma) - \delta$. 

Choose positive $\delta' < \delta$. Wait for a stage $s_0=\langle \sigma,t_0 \rangle$ with $\mu(\llbracket \rho \rrbracket)\leq \delta - \delta'$ for all $\rho \in \mathbb{B}^{s_0}$ and $f_{t_0}(\sigma)>P(\sigma)-\delta'$. Clearly, the available $\mu$-measure

\begin{align*}
\mu(\llbracket R \rrbracket) &= 1- \sum_{\tau \in \mathbb{B}^*} \mu(\llbracket  D_{s_0-1}(\tau) \rrbracket)
\\ &\geq 1- \mu(\llbracket  D_{s_0-1}(\sigma) \rrbracket) - \sum_{\tau \in \mathbb{B}^*\setminus \{\sigma\}} P(\tau)  
\\ &\geq P(\sigma) -\mu(\llbracket  D_{s_0-1}(\sigma) \rrbracket) 
\\ &\geq f_{t_0}(\sigma) -\mu(\llbracket  D_{s_0-1}(\sigma) \rrbracket).
\end{align*}

Consequently, if not already $\mu(\llbracket D_{s_0-1}(\sigma)\rrbracket)> P(\sigma)-\delta$, then the construction collects in $A_{s_0}$ a number of descriptions of length $s_0$ from $R$ such that $\mu(\llbracket D_{s_0}(\sigma)\rrbracket)= \mu(\llbracket D_{s_0-1}(\sigma)\rrbracket)+\sum_{\rho \in A_{s_0}}\mu(\llbracket \rho \rrbracket) > P(\sigma)-\delta$ as required. %TOCS \qed
%TOCS \end{veri}
\end{proof}

%TOCS \begin{corollary}\label{cor:chardiscr}
\begin{cor}\label{cor:chardiscr}
For every continuous computable measure $\mu$,
$$\{ Q_T^\mu \}_T = \mathcal{P}. $$
%TOCS \end{corollary}
\end{cor}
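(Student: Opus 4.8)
The plan is to deduce this exactly as Corollary \ref{cor:charcont} is deduced from Theorem \ref{thrm:gen}: by splitting the claimed equality into its two inclusions and checking each. The inclusion $\mathcal{P} \subseteq \{ Q^\mu_T \}_T$ is precisely the content of Proposition \ref{propo:gendiscr}, which for each semicomputable discrete semimeasure $P$ supplies a prefix-free machine $T$ with $P = Q^\mu_T$. So the only work left is the reverse inclusion $\{ Q^\mu_T \}_T \subseteq \mathcal{P}$, i.e., the routine verification that for every prefix-free machine $T$ the function $Q^\mu_T$ is a genuine semicomputable discrete semimeasure.

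For that I would check the three defining properties in turn. Non-negativity of $Q^\mu_T$ is immediate since $\mu$ is a measure. For semicomputability, write $D(\sigma) := \{ \rho : (\rho,\sigma) \in T \}$ and note that, as $\mathrm{dom}(T)$ is prefix-free, the strings in $D(\sigma)$ are pairwise incomparable; hence $Q^\mu_T(\sigma) = \mu(\llbracket D(\sigma) \rrbracket) = \sum_{\rho \in D(\sigma)} \mu(\llbracket \rho \rrbracket) = \sum_{\rho \in D(\sigma)} m(\rho)$ for the computable premeasure $m$ generating $\mu$. Approximating this nondecreasing sum along a c.e.\ enumeration of $T$, using the uniform computability of $m$, exhibits $Q^\mu_T(\llbracket \cdot \rrbracket)$ as a semicomputable function. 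Finally, for the bound $\sum_{\sigma} Q^\mu_T(\sigma) \leq 1$, observe that since $\mathrm{dom}(T) = \bigcup_{\sigma} D(\sigma)$ is prefix-free, the cones $\llbracket \rho \rrbracket$ for $\rho \in \mathrm{dom}(T)$ are pairwise disjoint, so $\sum_{\sigma} Q^\mu_T(\sigma) = \mu(\llbracket \mathrm{dom}(T) \rrbracket) \leq \mu(\mathbb{B}^\omega) = 1$.

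I do not anticipate a genuine obstacle: both inclusions are immediate once Proposition \ref{propo:gendiscr} is in hand, and the reverse inclusion is just an unwinding of the definitions. The only points requiring a moment's care are that semicomputability of $Q^\mu_T$ relies on the computability of $\mu$ and on prefix-freeness of $\mathrm{dom}(T)$ (so that the measure of $\llbracket D(\sigma) \rrbracket$ is an effectively approximable increasing sum), and that the summation bound relies on prefix-freeness across distinct outputs $\sigma$ to get disjointness of the cones. The statement of the corollary then follows by combining the two inclusions.
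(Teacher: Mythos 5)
Your proposal is correct and matches the paper's (implicit) argument exactly: the inclusion $\mathcal{P} \subseteq \{Q^\mu_T\}_T$ is Proposition \ref{propo:gendiscr}, and the reverse inclusion is the routine verification that each $Q^\mu_T$ is a semicomputable discrete semimeasure, which the paper folds into the wording of the definition of the transformation rather than proving separately. Your unwinding of that verification (disjointness of cones from prefix-freeness, lower semicomputability of the c.e.\ sum of computable reals, and the total mass bound) is sound.
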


\section{The a priori semimeasures}\label{sec:contap}
In this section we show that the class of a priori semimeasures can be characterized as the class of universal transformations of any continuous computable measure. Subsection \ref{subsec:contap} introduces the class of a priori semimeasures. Subsection \ref{subsec:mixt} is an interlude devoted to the representation of the a priori semimeasures as \emph{universal mixtures}. Subsection \ref{subsec:apgen} presents the generalized characterization, and concludes with a brief discussion of how this reflects on the association with foundational principles.

\subsection{A priori semimeasures}\label{subsec:contap}

\subsubsection*{Universal machines}
Let $\{ \rho_e \}_{e \in \mathbb{N}} \subseteq \mathbb{B}^*$ be any computable prefix-free and non-repeating enumeration of finite strings, that will serve as an encoding of some computable enumeration $\{ M_e \}_{e \in \mathbb{N}}$ of all monotone machines. We say that a monotone machine $U$ is \emph{universal (by adjunction)} if for some such
encoding $\{ \rho_e \}_{e \in \mathbb{N}}$, we have for all $\rho,\sigma \in \mathbb{B}^*$ that

\begin{equation*}
 (\rho_e \rho,\sigma) \in U  \Leftrightarrow (\rho,\sigma) \in M_e.
\end{equation*}

By a universal machine we will mean a machine that is universal by adjunction. Contrast this to \emph{weak universality}%(cf.\ \cite[p.\ 3492-3]{BarDow12ptrs})
, which is the more general property that for all $M$ there is a $c_M \in \mathbb{N}$ such that 
\begin{equation*}
 (\rho,\sigma) \in M  \Rightarrow \exists \rho'\big(|\rho'|<|\rho|+c_M \ \& \ (\rho,\sigma) \in U\big).
\end{equation*}

\subsubsection*{A priori semimeasures}
We call a transformation by a universal machine a \emph{universal transformation}. The a priori semimeasures are the universal transformations of the uniform measure.

%TOCS\begin{definition}\label{def:cap}
\begin{defn}\label{def:cap}
An \emph{a priori semimeasure} is defined by 
$$\lambda_U(\sigma) := \lambda(\llbracket \{ \rho: \exists \sigma' \preccurlyeq \sigma ((\rho,\sigma') \in U ) \} \rrbracket ) $$
for universal monotone machine $U$.
%TOCS\end{definition}
\end{defn}

Let $\mathcal{A}$ denote the class $\{ \lambda_U \}_U$ of a priori semimeasures. The next result implies that every element of $\mathcal{A}$ can also be obtained as the transformation of $\lambda$ by a machine that is \emph{not} universal.

%\begin{proposition}\label{propo:univnuniv}
\begin{propo}\label{propo:univnuniv}
For every continuous computable measure $\mu$, there is for every semicomputable semimeasure $\nu$ a \emph{non-universal} monotone machine $M$ such that $\nu=\mu_M$.
%TOCS \end{proposition}
\end{propo}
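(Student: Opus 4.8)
The plan is to leverage Theorem~\ref{thrm:gen} (equivalently Corollary~\ref{cor:charcont}) directly: given $\mu$ and $\nu$, I first want to realize $\nu$ as $\mu_M$ for \emph{some} monotone machine $M$, and then argue that I may always take $M$ to be non-universal. Since $\mathcal{A} \subseteq \mathcal{M}$ (an a priori semimeasure is a semicomputable semimeasure), and in any case the statement is about \emph{all} semicomputable semimeasures $\nu$, the existence of a transforming machine is already guaranteed by Theorem~\ref{thrm:gen}. So the real content is: the construction behind Theorem~\ref{thrm:gen} either already yields a non-universal machine, or can be cheaply modified to do so without disturbing the transformation. The cleanest route is to inspect the machine $M$ built in the Construction for Theorem~\ref{thrm:gen} and observe that at stage $s = \langle \sigma, t\rangle$ every description $\rho$ enrolled into $M$ has length exactly $s$. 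Hence the set of descriptions $\mathrm{dom}(M) = \bigcup_s A_s$ has the property that only finitely many stages contribute descriptions of any given length, so $\mathrm{dom}(M)$ is ``thin'' in a way a universal machine's domain can never be.

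The key steps, in order. First, fix $\mu$ and $\nu$; apply the Construction of Theorem~\ref{thrm:gen} to obtain a monotone machine $M$ with $\mu_M = \nu$. Second, record the length bookkeeping: at stage $s$, all strings in $A_s$ lie in $\mathbb{B}^s$, so for each $n \in \mathbb{N}$ the machine $M$ contains descriptions of length $n$ only from the single stage $s = n$ (and that stage enrolls at most $2^n$ of them). Third, show this forces non-universality. Suppose toward a contradiction that $M$ were universal by adjunction via an encoding $\{\rho_e\}_e$ of an enumeration $\{M_e\}_e$ of \emph{all} monotone machines. Pick a monotone machine $M_e$ that contains, say, two incomparable-output pairs $(\rho, \tau_0)$ and $(\rho', \tau_1)$ with $|\rho| = |\rho'|$ but with arbitrarily many descriptions of some common length --- more simply, pick $M_e$ to be the machine $\{(\rho, \epsilon) : \rho \in \mathbb{B}^k\}$ for a suitable $k$ (this is trivially monotone since the outputs are all $\epsilon$, hence comparable). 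Universality by adjunction then forces $(\rho_e \rho, \epsilon) \in M$ for all $2^k$ strings $\rho \in \mathbb{B}^k$, i.e.\ $M$ contains $2^k$ descriptions all of length $|\rho_e| + k$. Doing this for two different machines $M_e$, $M_{e'}$ with $|\rho_e| + k = |\rho_{e'}| + k'$ (achievable since the $\rho_e$ have varying lengths and we are free to choose $k, k'$) produces descriptions of one common length contributed by what the Construction would require to be two distinct stages --- contradiction. Actually the simplest contradiction: by adjunction $M$ must contain, for \emph{every} $e$ and every $\rho$ with $(\rho, \sigma) \in M_e$, the pair $(\rho_e\rho, \sigma)$; since the enumeration $\{M_e\}$ includes machines with descriptions of every length, and the prefixes $\rho_e$ are unbounded in length, $M$ ends up with infinitely many descriptions of infinitely many lengths in a pattern incompatible with ``one stage per length.'' I would phrase this as: $\mathrm{dom}(M)$ contains, for each $e$, a translated copy $\rho_e \cdot \mathrm{dom}(M_e)$; choosing $M_e$'s with large domains at a fixed length and $\rho_e$'s of matching lengths yields strictly more than $2^n$ descriptions of some length $n$ in $\mathrm{dom}(M)$, which the Construction demonstrably never produces.

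Alternatively --- and this may be the more robust formulation to avoid fussing over exact length counts --- I would argue directly from cardinality growth of a universal machine's domain. Any universal-by-adjunction $U$ contains a copy of $M_e$ for every $e$, including machines $M_e$ whose domains have $2^{n}$ strings of length $n$ for every $n$; translated by $\rho_e$ these sit inside $\mathrm{dom}(U)$, and since there are infinitely many such $e$ with distinct prefix lengths, $\mathrm{dom}(U)$ must have $\geq 2^{n}$ strings of length $n$ for infinitely many $n$ — in fact it cannot be prefix-sparse in the sense the Construction guarantees, because the Construction's $\mathrm{dom}(M)$ satisfies $|\mathrm{dom}(M) \cap \mathbb{B}^n| \leq 2^n$ with the strings of length $n$ all enrolled at the single stage $s=n$, whereas a universal machine accumulates length-$n$ strings from the adjunction images of \emph{all} machines $M_e$ with $|\rho_e| \le n$. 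Making this count precise (showing the universal domain genuinely overflows what the Construction allows) is where I expect the main friction: one must choose the witnessing machines $M_e$ and verify the length-matching against the fixed encoding, and be careful that ``universal by adjunction'' quantifies over \emph{some} encoding, not a fixed one. I would therefore state and prove a small lemma: \emph{if $M$ is monotone and for each $n$ all descriptions in $\mathrm{dom}(M)$ of length $n$ were enrolled at a single stage bounded by some computable function of $n$, then $M$ is not weakly universal} (which is stronger and cleaner), and then note the Construction's $M$ satisfies the hypothesis. This simultaneously settles non-universality and even non-\emph{weak}-universality, strengthening the proposition, though the stated proposition only needs the former. Closing remark: one subtlety is that Theorem~\ref{thrm:gen}'s Construction may set $M = \emptyset$ on some branches (e.g.\ if $\nu \equiv 0$), and the empty machine is vacuously non-universal, so no separate case is needed.
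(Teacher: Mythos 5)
Your structural observation about the construction of Theorem \ref{thrm:gen} is correct and potentially useful: at stage $s=\langle\sigma,t\rangle$ every pair added to $M$ has the form $(\rho,\sigma)$ with $|\rho|=s$, so for each length $n$ all pairs of $M$ whose description has length $n$ share one and the same output string. But the contradictions you actually try to derive from this do not go through. The cardinality version fails outright: no machine, universal or otherwise, can have more than $2^n$ descriptions of length $n$, since $|\mathbb{B}^n|=2^n$; and the adjoined copies $\rho_e\cdot\mathbb{B}^{k_e}$ with $|\rho_e|+k_e=n$ are pairwise disjoint subsets of $\mathbb{B}^n$ because the $\rho_e$ are prefix-free, so their union never exceeds $2^n$ either. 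The witness $M_e=\{(\rho,\epsilon):\rho\in\mathbb{B}^k\}$ is also a poor choice: all of its outputs are $\epsilon$, so the forced pairs $(\rho_e\rho,\epsilon)$ are of exactly the shape that a single stage $\langle\epsilon,t\rangle$ of the construction could itself produce; nothing requires ``two distinct stages.'' The argument that does work is the one you mention first and then abandon: take $M_e=\{(0,0),(1,1)\}$, which is a (finite, hence c.e.) monotone machine because its two inputs are incomparable. Universality by adjunction would force both $(\rho_e 0,0)$ and $(\rho_e 1,1)$ into $M$ --- two descriptions of the same length $|\rho_e|+1$ with incomparable outputs --- contradicting the one-output-per-length property. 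That should be the official argument, and the counting should be dropped. Your fallback ``lemma'' about weak universality is asserted rather than proved, and as stated it is a property of a construction history rather than of the machine $M$ itself, so it cannot be invoked as a black box.

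For comparison, the paper does not argue that the unmodified construction is already non-universal. It fixes an arbitrary universal machine $U$ in advance and modifies the construction of Theorem \ref{thrm:gen}: at stage $\langle\sigma,0\rangle$ it first searches $U$ for a description $\rho'$ of $\sigma$ and then only enrolls descriptions of length at least $|\rho'|+\langle\sigma,0\rangle$. This makes $M$'s descriptions of $\sigma$ longer than $|\rho'|+c$ for $c=\langle\sigma,0\rangle$, so $M$ fails even \emph{weak} universality, which is stronger than what the proposition asks. Your route, once repaired as above, is more elementary (no modification of the construction, no reference to a fixed $U$) but yields only failure of universality by adjunction; as written, however, it does not close.
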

\begin{proof}
Let $U$ be an arbitrary universal machine. We will adapt the construction of Theorem \ref{thrm:genap} of a machine $M$ with $\mu_M = \nu$ in such a way that for every constant $c\in \mathbb{N}$ there is a $\sigma$ such that for some $\rho'$ with $(\rho',\sigma) \in U$, we have that $| \rho| > |\rho'| + c$ for all $\rho$ with $(\rho,\sigma) \in M$. This ensures that $M$ is not even weakly universal.
\end{proof} 

%TOCS \begin{constr}
\begin{proof}[Construction]\renewcommand{\qedsymbol}{}
The only change to the earlier construction is that at stage $s$ we try to collect available strings of length $l_s$, where $l_s$ is defined as follows. Let $l_0 = 0$. For $s=\langle \sigma, t\rangle$ with $t>0$, let $l_s = l_{s-1}+1$. In case $s=\langle \sigma, 0\rangle$, enumerate pairs in $U$ until a pair $(\rho', \sigma)$ for some $\rho'$ is found. Let $l_s := \max\{l_{s-1}+1,|\rho'|+s \}$.
%TOCS\end{constr}
\end{proof}

%TOCS \begin{veri}
\begin{proof}[Verification]
The verification that $\mu_M = \nu$ proceeds as before. In addition, the construction guarantees that for every $c \in \mathbb{N}$, we have for $\sigma$ with $c=\langle \sigma, 0 \rangle$ that $| \rho| > |\rho'| + c$ for the first enumerated $\rho'$ with $(\rho',\sigma) \in U$ and all $\rho$ with $(\rho,\sigma) \in M$. %TOCS \qed
%TOCS \end{veri}
\end{proof}

\subsection{Universal mixtures}\label{subsec:mixt}
Every element of $\mathcal{A}$ is equal to a \emph{universal mixture}

\begin{equation}\label{eq:mix}
\xi_W(\cdot) := \sum_{i \in \mathbb{N}} W(i) \nu_i(\cdot)
\end{equation}
for some effective enumeration $\{\nu_i\}_{i \in \mathbb{N}}=\mathcal{M}$ of all semicomputable semimeasures, and some semicomputable \emph{weight function} $W: \mathbb{N} \rightarrow [0,1]$ that satisfies $\sum_{i \in \mathbb{N}} W(i) \leq 1$ and $W(i)>0$ for all $i$. Conversely, one can show that every universal mixture equals $\lambda_U$ for some universal machine $U$ \cite{WooSunHut13inproc}.

Let $\mathcal{U}$ denote the elements $\kappa$ of $\mathcal{M}$ that are \emph{universal} in the sense that they  \emph{dominate} every other semicomputable semimeasure. That is, for such $\kappa \in \mathcal{U}$ there is for every $\nu \in \mathcal{M}$ a constant $c_\nu \in \mathbb{N}$, depending only on $\kappa$ and $\nu$, such that $ \kappa(\sigma) \geq c^{-1}_\nu \nu(\sigma)$ for all $\sigma \in \mathbb{B}^*$. It is clear from the mixture form of the a priori semimeasures that $\mathcal{A} \subseteq \mathcal{U}$. This inclusion is strict: not all universal elements are of the form $\lambda_U$ (equivalently, mixtures). For instance, $\xi_W(\epsilon)<1$ for all $W$ because $\nu(\epsilon)<1$ for some $\nu \in \mathcal{M}$, but we can obviously define a universal $\kappa \in \mathcal{M}$ with $\kappa(\epsilon)=1$.

We can strengthen the above statement of the equivalence of the a priori semimeasures and the universal mixtures by requiring a computable weight function $W$ over a fixed enumeration $\{\nu_i\}_{i \in \mathbb{N}}$, as follows. 

First, let us call an enumeration $\{ \nu_i \}_{i \in \mathbb{N}}$ of all semicomputable semimeasures \emph{acceptable} if it is generated from an enumeration $\{M_i\}_i$ of all monotone Turing machines by the procedure of Theorem \ref{thrm:gen}, i.e., $\nu_i = \lambda_{M_i}$. This terminology matches that of the definition of \emph{acceptable numberings} of the partial computable functions \cite[p.\ 41]{Rog67}. Every effective listing of all Turing machines yields an acceptable numbering. Importantly, any two acceptable numberings differ only by a computable permutation \cite{Rog58jsl}; in our case, for any two acceptable enumerations $\{ \nu_i \}_i$ and $\{ \bar{\nu}_i \}_i$ there is a computable permutation $f: \mathbb{N}\rightarrow \mathbb{N}$ of indices such that $\bar{\nu}_i = \nu_{f(i)}$. 

Furthermore, let us call a semicomputable weight function $W$ \emph{proper} if $\sum_i W(i) = 1$; this implies that $W$ is computable. 

Then we can show that for any acceptable enumeration of all semicomputable semimeasures, all elements in $\mathcal{A}$ are expressible as some mixture with a \emph{proper} weight function over \emph{this} enumeration. 

\begin{propo}
For every acceptable enumeration $\{ \nu_i \}_i$ of $\mathcal{M}$, every element in $\mathcal{A}$ is equal to $\xi_W(\cdot) = \sum_i W(i) \nu_i(\cdot)$ for some proper $W$.
\end{propo}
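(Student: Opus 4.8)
The plan is to obtain the mixture representation of $\lambda_U$ directly from universality by adjunction, transport it onto the prescribed enumeration $\{\nu_i\}_i$ using the rigidity of acceptable numberings, and then inflate the resulting sub‑probability weight function to one summing to $1$ by absorbing the missing mass into a component carrying the zero semimeasure.

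So let $\lambda_U\in\mathcal{A}$, witnessed by a universal machine $U$ with prefix‑free encoding $\{\rho_e\}_e$ of a computable enumeration $\{M_e\}_e$ of all monotone machines. Unwinding the adjunction clause $(\rho_e\rho,\sigma)\in U\Leftrightarrow(\rho,\sigma)\in M_e$, and using that the cones $\llbracket\rho_e\rrbracket$ are pairwise disjoint, one gets $\lambda_U(\sigma)=\sum_e 2^{-|\rho_e|}\lambda_{M_e}(\sigma)$ for all $\sigma$; that is, $\lambda_U=\xi_V$ over the enumeration $e\mapsto\lambda_{M_e}$ with $V(e):=2^{-|\rho_e|}$, a positive computable weight function whose total $c:=\sum_e 2^{-|\rho_e|}$ is at most $1$. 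Since $\{\lambda_{M_e}\}_e$ is itself an acceptable enumeration, there is a computable permutation $g$ with $\lambda_{M_e}=\nu_{g(e)}$, whence $\lambda_U=\sum_i W_0(i)\nu_i$ with $W_0(i):=2^{-|\rho_{g^{-1}(i)}|}$ — again positive, computable, with $\sum_i W_0(i)=c\le 1$.

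It remains to pass from $W_0$ (total $c$) to a proper weight. The empty machine occurs in the enumeration generating $\{\nu_i\}$, so the zero semimeasure $\lambda_\emptyset$ is $\nu_k$ for some index $k$. Put $W(i):=W_0(i)$ for $i\ne k$ and $W(k):=W_0(k)+(1-c)$. Then $W$ is still positive, $\sum_i W(i)=1$, and since increasing the weight on a component carrying $\lambda_\emptyset$ leaves the mixture unchanged, $\xi_W=\xi_{W_0}=\lambda_U$; thus $\lambda_U=\xi_W$ for this $W$.

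The delicate point — and the step I expect to be the main obstacle — is the semicomputability of $W$ in this last move: in general $c$ is only semicomputable from below, so $1-c$ is only semicomputable from above, and the lump top‑up $W(k)=W_0(k)+(1-c)$ need not be a (lower) semicomputable, hence proper, weight function. I would repair this by never performing the top‑up as a single jump. One route: before the last step, replace $U$ by a universal machine $U'$ with $\lambda_{U'}=\lambda_U$ whose encoding is \emph{tight}, i.e.\ has Kraft sum exactly $1$; then the previous paragraph applied to $U'$ yields a proper weight directly, positivity being automatic since codes are nonempty. Such a $U'$ is built by copying $U$'s code‑to‑machine assignment (so every non‑trivial machine keeps the same total encoding weight) while interleaving fresh codes for the empty machine, laid down by a stage construction that exhausts the leftover code space. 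The obstacle is getting that construction's bookkeeping right: at each stage one must still have reserved enough code space to realize the next genuine component of the mixture, while arranging that the empty‑machine codes use up \emph{precisely} the measure $1-c$, even though $1-c$ is only approximable from above. Once this scheduling is handled, the rest — verifying the monotone‑machine property, universality by adjunction, and $\xi_W=\lambda_U$ — is routine and parallels the constructions already used for Theorem \ref{thrm:gen} and Proposition \ref{propo:univnuniv}.
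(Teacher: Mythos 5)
Your first two steps (the decomposition $\lambda_U=\sum_e 2^{-|\rho_e|}\lambda_{M_e}$ and the transport to the given enumeration via a computable permutation of acceptable numberings) are exactly the paper's, and you correctly identify the real obstacle: the total $c=\sum_e 2^{-|\rho_e|}$ is in general only a left-c.e.\ real, so the naive top-up $W(k)=W_0(k)+(1-c)$ on the zero semimeasure is not lower semicomputable and hence not a weight function at all. The problem is that your proposed repair cannot work. If $U'$ is universal by adjunction, its encoding $\{\rho'_d\}_d$ is a \emph{computable} enumeration, and your stage construction knows which codes it hands to the empty machine; that code set is therefore c.e., so its Kraft sum is left-c.e. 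But to preserve $\lambda_{U'}=\lambda_U$ while keeping every nontrivial machine's total weight, the empty-machine codes must carry measure exactly $1-c$, which is right-c.e. Equality would force $1-c$ (hence $c$) to be computable --- and $c$ need not be: one can choose a computable prefix-free non-repeating encoding whose Kraft sum is a non-computable left-c.e.\ real (the Chaitin $\Omega$ construction). Since the only semimeasure that can absorb surplus weight without changing the mixture is the zero semimeasure, no redistribution onto copies of the empty machine can close this gap; the scheduling problem you flag as "bookkeeping" is an impossibility.

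The paper's resolution uses a genuinely different idea: instead of trying to make the residual mass computable, it hides the non-computable part of the weights inside a single auxiliary \emph{semimeasure}. Fix a rational $q$ with $\xi_{W'}(\epsilon)<q<1$ and replace each weight $W'(i)$ by a computable minorant $g(i)=\min\{2^{-i-c},W'_0(i)\}$ whose total $\sum_i g(i)<1-q$ is a computable real (it is dominated by a computable convergent series). The discrepancy is packaged into $\pi=q^{-1}\sum_i\bigl(W'(i)-g(i)\bigr)\nu_i$, which is itself a semicomputable semimeasure (here $q>\xi_{W'}(\epsilon)$ is used) and so occurs as some $\nu_k$; giving $\nu_k$ the extra \emph{rational} weight $q$ restores the original mixture, and the leftover $1-q-\sum_{j\neq l}g(j)$, now a computable real, goes to the empty semimeasure $\nu_l$. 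This yields a computable, proper $W''$ with $\xi_{W''}=\xi_{W'}$. Your write-up would need this (or an equivalent) device; as it stands the final step is a gap that your suggested construction cannot fill.
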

\begin{proof}
Given $\lambda_U \in \mathcal{A}$, with enumeration $\{M_i\}_i$ of all monotone machines corresponding to $U$. We know that $\lambda_U$ is equal to $\bar{\xi}_W(\cdot) = \sum_i W(i) \bar{\nu}_i(\cdot)$ for acceptable enumeration $\{ \bar{\nu}_i \}_i=\{ \lambda_{M_i}\}_i$ of $\mathcal{M}$ and semicomputable weight function $W$. First we show that $\bar{\xi}_W$ is equal to $\xi_{W'}(\cdot) = \sum_i W'(i) \nu_i(\cdot)$ for given acceptable enumeration $\{ \nu_i \}_i$ and semicomputable $W'$; then we show that it is also equal to $\xi_{W''}(\cdot) = \sum_i W''(i) \nu_i(\cdot)$ for proper $W''$. 

Since enumerations $\{ \nu_i \}_i$ and $\{ \bar{\nu}_e \}_e$ are both acceptable, there is a 1-1 computable $f$ such that $\bar{\nu}_i = \nu_{f(i)}$. Then

\begin{align*}
\sum_i W(i) \bar{\nu}_i(\cdot) &= \sum_i W(i) \nu_{f(i)}(\cdot)
\\ &= \sum_i W(f^{-1}(i))\nu_i(\cdot)
\\ &= \sum_i W'(i)\nu_i(\cdot),
\end{align*}
with $W': i \mapsto W(f^{-1}(i))$.

We proceed with the description of a proper $W''$. The idea is to have $W''$ assign to each $i$ a positive computable weight that does not exceed $W'(i)$, additional computable weight to the index of a single suitably defined semimeasure in order to regain the original mixture, and all of the remaining weight to an ``empty'' semimeasure. 

Let $q \in \mathbb{Q}$ be such that $\xi_{W'}(\epsilon) < q < 1$, and let $c$ be such that $\sum_i 2^{-i-c} < 1-q$. Let $W'_{0}(i)$ denote the first approximation of semicomputable $W'(i)$ that is positive. We now define computable $g: \mathbb{N}\rightarrow\mathbb{Q}$ by 
$$ g(i)= \min\{ 2^{-i-c}, W'_{0}(i)\}.$$
Clearly, $\sum_i g(i) < 1-q$. Moreover, $\sum_i g(i)$ is computable because for any $\delta > 0$ we have a $j \in \mathbb{N}$ with $\sum_{i>j}2^{-i-c} < \delta$, hence $\sum_{i \leq j} g(i) < \sum_i g(i) < \sum_{i \leq j} g(i) + \delta$.

Next, define $\pi(\cdot) = q^{-1} \sum_i \left( W'(i)-g(i) \right) \nu_i(\cdot)$. This is a semimeasure because $\pi(\epsilon) \leq q^{-1} \xi_{W'}(\epsilon) < q^{-1}q = 1$. Let $k$ be such that $\nu_k = \pi$, and let $l$ be such that $\nu_l$ is the ``empty'' semimeasure with $\nu(\sigma)=0$ for all $\sigma \in \mathbb{B}^*$ (both indices exist even if we cannot effectively find them).

Finally, we define $W''$ by

$$W''(i) = \begin{cases}
   g(i) & \text{if } i \neq k,l \\
   g(i) + q       & \text{if }i=k \\
   1-q-\sum_{j \neq l} g(j) & \text{if }i=l  
  \end{cases}.$$
  
weight function $W''$ is computable and indeed proper, and  
\begin{align*}
\sum_i W''(i) \nu_i(\cdot) &= \sum_i g(i)\nu_i(\cdot) + q  \nu_k(\cdot) + 0
\\ &= \sum_i g(i)\nu_i(\cdot) + \sum_i \left( W'(i)-g(i) \right) \nu_i(\cdot)
\\ &= \sum_i W'(i)\nu_i(\cdot).
\end{align*}
\end{proof}

%\texttt{In termen van machines?}
As a kind of converse, we can derive that any universal mixture is also equal to a universal mixture with a \emph{universal} weight function, i.e., a weight function $\bar{W}$ such that for all other $W$ there is a $c_W$ with $\bar{W}(i) \geq c_W^{-1} W(i)$ for all $i$. 

\begin{propo}
For every acceptable enumeration $\{ \nu_i \}_i$ of $\mathcal{M}$, every element in $\mathcal{A}$ is equal to $\xi_{\bar{W}}(\cdot) = \sum_i \bar{W}(i) \nu_i(\cdot)$ for some universal $\bar{W}$.
\end{propo}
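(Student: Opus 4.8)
The plan is to reduce the claim to the construction of a single lower‑semicomputable weight function and then to ``fatten'' it. Given $\lambda_U\in\mathcal A$, the preceding proposition already gives $\lambda_U=\xi_{W'}(\cdot)=\sum_i W'(i)\nu_i(\cdot)$ for a proper — hence computable — weight function $W'$ over the fixed acceptable enumeration $\{\nu_i\}_i$, with $W'(i)>0$ for all $i$. It then suffices to produce a semicomputable weight function $\bar W$ with $\xi_{\bar W}=\xi_{W'}$ that \emph{dominates} a fixed universal semicomputable discrete semimeasure $\mathbf m$ on the index set $\mathbb N$: any such $\bar W$ dominates every semicomputable weight function and is thus universal.

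For the construction I would proceed in stages $s=\langle\sigma,t\rangle$, exactly along the lines of the proof of Theorem~\ref{thrm:gen} and the two preceding propositions, only ever increasing $\bar W$. First fix, using $\lambda_U\in\mathcal A\subseteq\mathcal U$, a rational $\delta>0$ with $\delta\,\xi_{\mathbf m}(\sigma)\le\lambda_U(\sigma)$ for all $\sigma$ — we keep the freedom to shrink $\delta$ later. At stage $s$ we (i) top up $\bar W_s(i)$ so that $\bar W_s(i)\ge\delta\,\mathbf m_s(i)$ for every index $i$ enumerated so far, so that in the limit $\bar W\ge\delta\,\mathbf m$ and $\bar W$ is universal; and (ii) compare the partial mixture $\xi_{\bar W_s}(\sigma)$ with the approximation $f_t(\sigma)$ of $\lambda_U(\sigma)$ and, if it falls short, place fresh lower‑semicomputable weight on hitherto‑unused indices carrying the semimeasures that account for the gap, parking any residual budget on fresh indices carrying the empty semimeasure (which contribute nothing to the mixture). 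Here one uses that, the enumeration being acceptable, one may fix a computable infinite set of indices all carrying the empty semimeasure, and likewise for any auxiliary semimeasure one needs to introduce.

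The point at which I expect the real work to lie is reconciling requirement (i) with requirement (ii) in the limit. Forcing $\bar W\ge\delta\,\mathbf m$ injects the surplus $\delta\,\xi_{\mathbf m}$ into the mixture, and since $\xi_{\mathbf m}$ is not a scalar multiple of $\lambda_U$ this surplus cannot be neutralised by changing the weight of a single index (that was what broke down already for the proper weight function of the preceding proposition), while ``subtracting it back'' is barred by lower semicomputability — so the correction must instead be spread across the redundant copies of all the semimeasures. One must therefore verify that the per‑stage corrections — pushing $\xi_{\bar W_s}$ up toward $f_t$ while never overshooting $\lambda_U$, which is safe precisely because $\delta\,\xi_{\mathbf m}<\lambda_U$ — can always be realised by nonnegative weights on available indices, with $\sum_i\bar W(i)$ never escaping $[0,1]$; shrinking $\delta$ to leave enough slack, and the additivity of the ``defect'' $\nu(\sigma)-\nu(\sigma0)-\nu(\sigma1)$ over the mixture, should be what makes this possible. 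Granting this, the verification finishes as before: $\bar W$ is a positive semicomputable weight function dominating $\mathbf m$, hence universal, and $\xi_{\bar W}=\lambda_U$ by construction.
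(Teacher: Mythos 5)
Your reduction is sound as far as it goes: starting from the preceding proposition you have $\lambda_U=\xi_{W'}$ for a computable proper $W'$ over the given acceptable enumeration, and it does suffice to exhibit a semicomputable $\bar W$ with $\xi_{\bar W}=\xi_{W'}$ that dominates a positive multiple of a universal discrete semimeasure $\mathbf{m}$ on the indices. But the construction you sketch has a genuine gap, and it is exactly the one you flag yourself. Once you force $\bar W\ge\delta\,\mathbf{m}$, the remaining weight you are allowed to place must account for $\lambda_U-\delta\,\xi_{\mathbf{m}}=\sum_i\bigl(W'(i)-\delta\,\mathbf{m}(i)\bigr)\nu_i$, whose coefficients are only \emph{upper} semicomputable (computable minus lower semicomputable) and need not even be nonnegative index by index. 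Your stage-wise repair --- ``place fresh weight on indices carrying the semimeasures that account for the gap'' --- has no mechanism to realise this: the gap at stage $s$ is a function of $\sigma$, not a designated semimeasure; adding weight $w$ on an index $j$ to close the gap at one string $\sigma$ also adds $w\,\nu_j(\tau)$ at every other $\tau$, where it may overshoot $\lambda_U(\tau)$; and since weights only increase, an overshoot can never be retracted. Shrinking $\delta$ does not help, because the obstruction is the sign and the direction of semicomputability of the coefficients, not their size. In effect you have reduced the problem to expressing a difference of two lower semicomputable mixtures as a lower semicomputable mixture, which is the original difficulty rather than a solution to it.

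The step you talked yourself out of is in fact the correct one. You assert that the surplus $\delta\,\xi_{\mathbf{m}}$ ``cannot be neutralised by changing the weight of a single index,'' but it can, provided you choose the index carrying $\xi_{\mathbf{m}}$ \emph{itself}. Take $\mathbf{m}(i)=2^{-K(i)}$ and let $k$ be an index with $\nu_k=\sum_i 2^{-K(i)}\nu_i$ (this mixture is a semicomputable semimeasure, so such a $k$ exists, even if one cannot find it effectively). Set $\bar W(i)=W'(i)+W'(k)\,2^{-K(i)}$ for $i\ne k$ and $\bar W(k)=W'(k)\,2^{-K(k)}$. The surplus injected is $W'(k)\sum_i 2^{-K(i)}\nu_i=W'(k)\,\nu_k$, which is exactly the summand removed at index $k$, so $\xi_{\bar W}=\xi_{W'}$ identically; moreover $\sum_i\bar W(i)<\sum_i W'(i)\le 1$, $\bar W$ is semicomputable since $W'$ is computable, and $\bar W(i)\ge W'(k)\,2^{-K(i)}$ makes it universal. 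No stage construction, no choice of $\delta$, and no subtraction of semicomputable quantities is needed.
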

\begin{proof}
By the above proposition we know that any given element in $\mathcal{A}$ equals $\xi_W = \sum_i W(i)\nu_i$ for some (computable) $W$ over given $\{ \nu_i \}_i$. Let $k$ be such that $\nu_k = \sum_i 2^{-K(i)} \nu_i$, with $K(i)$ the prefix-free Kolmogorov complexity (via some universal prefix-free machine $U$) of the $i$-th lexicographically ordered string; $2^{-K(\cdot)}$ is a universal weight function. Define 
$$\bar{W}(i) = \begin{cases}
   W(i)+W(k)\cdot 2^{-K(i)} & \text{if } i \neq k \\
   W(k)\cdot2^{-K(i)}      & \text{if }i=k  
  \end{cases},$$
which is a weight function because $\sum_i \bar{W}(i) < \sum_{i\neq k} W(i) + W(k)=\sum_i W(i)$. Moreover, $\bar{W}$ is universal because $2^{-K(\cdot)}$ is, and
\begin{align*}
\sum_i \bar{W}(i) \nu_i(\cdot) &=  \sum_{i \neq k} W(i)\nu_i(\cdot) + W(k) \sum_i 2^{-K(i)}\nu_i(\cdot) 
\\ &= \sum_i W(i)\nu_i(\cdot).
\end{align*}\end{proof} 

Hutter \cite[p.\ 102-03]{Hut05} argues that a universal mixture with weight function $2^{-K(i)}$ is \emph{optimal} among all universal mixtures, essentially because this weight function is universal. The above result shows that this optimality is meaningless: \emph{every} universal mixture can be represented so as to have a universal weight function.

\subsection{The generalized characterization}\label{subsec:apgen}
We are now ready to show that the universal transformations of any continuous computable measure $\mu$ yield the same class $\mathcal{A}$ of a priori semimeasures. A minor caveat is that we will need to restrict the universal machines $U$ to those machines with associated encodings $\{ \rho_e \}_e$ that do not receive measure 0 from $\mu$: so $\mu(\llbracket \rho_e \rrbracket)>0$ for all $e \in \mathbb{N}$. Call (the associated encodings of) those machines \emph{compatible} with measure $\mu$. This is clearly no restriction for measures that give positive probability to every finite string (such as the uniform measure): all machines are compatible to such measures.

We will prove:

%TOCS \begin{theorem}\label{thrm:genap}
\begin{thrm}\label{thrm:genap}
Let $\mu, \tilde{\mu}$ be continuous computable measures. For universal machine $U$ that is compatible with $\mu$, there is universal machine $\widetilde{U}$ such that $\mu_U = \tilde{\mu}_{\widetilde{U}}$.
%TOCS \end{theorem}
\end{thrm}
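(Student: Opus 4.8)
The plan is to factor the statement through the representation of the a priori semimeasures as universal mixtures (Subsection~\ref{subsec:mixt}): first show that $\mu_U$ is a universal mixture, using $\mu$-compatibility of $U$; then show the converse, that every universal mixture is the $\tilde\mu$-transformation of some (universal, $\tilde\mu$-compatible) machine; and finally chain the two. For the first step, write $U=\bigcup_e\{(\rho_e\rho,\sigma):(\rho,\sigma)\in M_e\}$ for the encoding $\{\rho_e\}_e$ and enumeration $\{M_e\}_e$ of all monotone machines witnessing universality by adjunction. Since $\{\rho_e\}_e$ is prefix-free the relevant cones are pairwise disjoint, so
\[
\mu_U(\sigma)=\textstyle\sum_e\mu\bigl(\llbracket\{\rho_e\rho:\exists\sigma'\succcurlyeq\sigma\ ((\rho,\sigma')\in M_e)\}\rrbracket\bigr).
\]
Compatibility enters here: $\mu(\llbracket\rho_e\rrbracket)>0$ lets us form the \emph{conditioned} premeasure $\tau\mapsto\mu(\llbracket\rho_e\tau\rrbracket)/\mu(\llbracket\rho_e\rrbracket)$, which defines a continuous computable measure $\mu^{(e)}$, and the display rewrites as $\mu_U=\sum_e\mu(\llbracket\rho_e\rrbracket)\,(\mu^{(e)})_{M_e}=\sum_e W(e)\nu_e$ with $W(e)=\mu(\llbracket\rho_e\rrbracket)>0$, $\sum_eW(e)\le1$, and $\nu_e:=(\mu^{(e)})_{M_e}\in\mathcal M$. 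I would then check that $\{\nu_e\}_e$ is an effective enumeration (lower semicomputability is uniform in $e$ because $\mu$ and the $\rho_e$ are computable) and, via a sufficiently uniform form of Corollary~\ref{cor:charcont} applied to the measures $\mu^{(e)}$ together with the reindexing manipulations of Subsection~\ref{subsec:mixt}, that this is a genuine universal mixture; in particular $\mu_U(\sigma)>0$ for every $\sigma$ and $\mu_U(\epsilon)<1$.

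For the second step I would prove, in analogy with Theorem~\ref{thrm:gen} and Proposition~\ref{propo:univnuniv}, that every universal mixture $\xi$ equals $\tilde\mu_{\widetilde U}$ for a universal, $\tilde\mu$-compatible $\widetilde U$. Build $\widetilde U$ directly in adjunction form $\bigcup_e\{(\tilde\rho_e\rho,\sigma):(\rho,\sigma)\in\widetilde M_e\}$, where $\{\widetilde M_e\}_e$ is a fixed computable enumeration of \emph{all} monotone machines and the only freedom is the computable prefix-free non-repeating encoding $\{\tilde\rho_e\}_e$, which is defined stage by stage. At stage $s=\langle\sigma,t\rangle$ one drives $\tilde\mu_{\widetilde U_s}(\sigma)$ up toward the running approximation $f_t(\sigma)$ of $\xi(\sigma)$, exactly as in Theorem~\ref{thrm:gen}; the new ingredient is that $\tilde\rho_e$ is committed only once $\widetilde M_e$ has revealed enough of its behaviour, and is placed deep enough (and, exploiting continuity of $\tilde\mu$, at a location where $\tilde\mu$ is spread out enough) that the \emph{entire} future contribution of the $e$-th embedded machine to $\tilde\mu_{\widetilde U}$ stays, at every string $\tau$, below $2^{-e}\xi(\tau)$. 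Universality of $\xi$ (hence $\xi(\tau)>0$ everywhere, with computable positive lower bounds) is what makes this possible while keeping every $\tilde\mu(\llbracket\tilde\rho_e\rrbracket)>0$, so that $\widetilde U$ is $\tilde\mu$-compatible. The verification that $\tilde\mu_{\widetilde U}(\sigma)=\xi(\sigma)$ then mirrors the verification of Theorem~\ref{thrm:gen}, with the extra clause that these autonomous contributions never overshoot, so a stage at which one can still ``fill up'' to the approximation is always reached.

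Combining the two steps proves Theorem~\ref{thrm:genap}: $\mu_U$ is a universal mixture, hence $\mu_U=\tilde\mu_{\widetilde U}$ for some universal $\widetilde U$. Applied with $(\mu,\tilde\mu)$ and again with $(\tilde\mu,\mu)$ — noting that the uniform measure is compatible with every machine — this yields $\{\mu_U\}_U=\{\tilde\mu_{\widetilde U}\}_{\widetilde U}=\mathcal A$ for all continuous computable measures, which is the headline statement.

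The hard part will be the construction in the second step. The universal-by-adjunction form is rigid: every monotone machine must be embedded, and each embedded machine runs autonomously, depositing $\tilde\mu$-mass on whatever outputs it produces, so the only lever is where the encoding strings sit. The real danger is overshooting $\xi(\sigma)$ at strings where $\xi$ is minuscule — notably along ``$\tilde\mu$-random'' branches — which is exactly why one needs $\xi$ to be universal (equivalently why $U$ must be $\mu$-compatible) and why the embedded machines must be handed rapidly shrinking encoding weights calibrated to the target values; making this calibration effective, for machines whose output depth is unbounded, is the delicate point. A secondary point of care is in the first step: verifying that the transformations of the conditioned measures $\mu^{(e)}$ by the enumerated machines really do exhaust $\mathcal M$, which requires the earlier constructions to be carried out uniformly in (an index for) the input measure.
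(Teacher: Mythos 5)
Your first step is essentially the paper's Proposition \ref{propo:genuniv}: decompose $\mu_U=\sum_e\mu(\llbracket\rho_e\rrbracket)\,\mu^e_{M_e}$ and argue this is a universal mixture. One caution there: uniformity of the construction of Theorem \ref{thrm:gen} in an index for the input measure is necessary but not sufficient to show that $\{\mu^e_{M_e}\}_e$ exhausts $\mathcal{M}$, because the index $e$ of the conditioning string and the index of the machine are tied together. Uniformity only gives you, for each $\nu$, a total computable $g$ with $\mu^e_{M_{g(e)}}=\nu$; you then need the Recursion Theorem to produce a fixed point $\hat e$ with $M_{g(\hat e)}=M_{\hat e}$, hence $\mu^{\hat e}_{M_{\hat e}}=\nu$. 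This is exactly the paper's Lemma \ref{lmm:condequiv}, and your sketch stops short of the diagonal step that makes it true.

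The genuine gap is in your second step. A machine that is universal by adjunction over a fixed enumeration $\{\widetilde M_e\}_e$ of \emph{all} monotone machines contains \emph{only} pairs $(\tilde\rho_e\rho,\sigma)$ with $(\rho,\sigma)\in\widetilde M_e$, so there is no analogue of the ``fill up to $f_t(\sigma)$'' move of Theorem \ref{thrm:gen}: you cannot add description pairs at will, and the resulting semimeasure is rigidly $\sum_e\tilde\mu(\llbracket\tilde\rho_e\rrbracket)\,\tilde\mu^{\tilde e}_{\widetilde M_e}$, where the components are whatever the autonomously running machines produce. Choosing only the locations of the $\tilde\rho_e$ gives you control over the weights (and the conditionals), but not over the components, and there is no reason the sum can be made to hit an arbitrary prescribed universal mixture exactly — your $2^{-e}\xi(\tau)$ bound controls overshooting but provides no mechanism for attaining equality. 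The paper avoids this entirely by not fixing the enumeration of embedded machines in advance: it realizes the specific weights $\mu(\llbracket\rho_e\rrbracket)$ as $Q^{\tilde\mu}_T(\rho_e)$ for a prefix-free machine $T$ built by Proposition \ref{propo:gendiscr} (whose $T$-descriptions $\tilde\rho_{\langle e,i\rangle}$ become the new encoding), and then constructs bespoke machines $\widetilde M_{\langle e,i\rangle}$ via Theorem \ref{thrm:gen} with $\tilde\mu^{\langle\widetilde{e,i}\rangle}_{\widetilde M_{\langle e,i\rangle}}=\mu^e_{M_e}$, so that equality holds term by term. The price is that $\{\widetilde M_d\}_d$ need not enumerate all machines, so universality of $\widetilde U$ must be restored separately — by splicing an arbitrary universal $V$ into the enumeration at an index $d=\langle\hat e,0\rangle$ where $\tilde\mu^{\langle\widetilde{\hat e,0}\rangle}_V=\mu^{\hat e}_{M_{\hat e}}$, whose existence is again a Recursion Theorem fixed-point argument. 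Both of these moves (term-by-term realization of the components, and the universality patch) are missing from your plan, and the construction you propose in their place does not obviously converge to the target.
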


It follows that $\{\mu_U \}_U = \{ \tilde{\mu}_{\widetilde{U}} \}_{\widetilde{U}}$ for any two continuous computable $\mu$ and $ \tilde{\mu}$,
with $U$ ranging over those universal machines compatible with $\mu$ and $\widetilde{U}$ over those universal machines compatible with $\tilde{\mu}$. In particular, since $\lambda$ is itself a continuous computable measure, we have that $\{\mu_U \}_U = \mathcal{A}$.

Our proof strategy is to expand the approach taken in \cite{WooSunHut13inproc} to show the coincidence of the a priori semimeasures and the universal mixtures. Let us first derive the fact that a universal transformation of $\mu$ is an a priori semimeasure.

%\begin{proposition}\label{propo:genuniv}
\begin{propo}\label{propo:genuniv}
Let $\mu$ be a continuous computable measure and universal machine $U$ compatible with $\mu$. Then $\mu_U \in \mathcal{A}$.
%TOCS \end{proposition} 
\end{propo}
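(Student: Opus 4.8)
The plan is to exhibit $\mu_U$ as a \emph{universal mixture} and then invoke the converse half of the mixture characterization recalled in Subsection \ref{subsec:mixt} (from \cite{WooSunHut13inproc}), namely that every universal mixture equals $\lambda_V$ for some universal monotone machine $V$; this immediately gives $\mu_U\in\mathcal{A}$. The decomposition mirrors the easy direction ``$\lambda_U$ is a mixture'' of \cite{WooSunHut13inproc}, but with conditional measures taking the place of $\lambda$.

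First I would fix an encoding $\{\rho_e\}_e$ witnessing the universality of $U$, together with the enumeration $\{M_e\}_e$ of all monotone machines it encodes, so that $(\rho_e\rho,\sigma)\in U \Leftrightarrow (\rho,\sigma)\in M_e$. For each $e$ define the conditional premeasure $\mu^{\rho_e}$ by $\mu^{\rho_e}(\sigma) := \mu(\llbracket\rho_e\sigma\rrbracket)/\mu(\llbracket\rho_e\rrbracket)$; this is legitimate precisely because $U$ is compatible with $\mu$, and one checks routinely that $\mu^{\rho_e}$ is a continuous computable measure, uniformly in $e$ (a positive computable real has an effectively computable reciprocal, and a positive lower bound can be found by an effective search). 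Since $\{\rho_e\}_e$ is prefix-free the cylinders $\llbracket\rho_e\rrbracket$ are pairwise disjoint, and unwinding the definition of the transformation together with the identity $\mu(\llbracket\rho_e\tau\rrbracket)=\mu(\llbracket\rho_e\rrbracket)\,\mu^{\rho_e}(\tau)$ yields
\[
\mu_U(\sigma)=\sum_{e\in\mathbb{N}}\mu(\llbracket\rho_e\rrbracket)\,(\mu^{\rho_e})_{M_e}(\sigma)\qquad\text{for all }\sigma\in\mathbb{B}^*.
\]
Put $W(e):=\mu(\llbracket\rho_e\rrbracket)$ and $\theta_e:=(\mu^{\rho_e})_{M_e}$. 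Then $W$ is computable, $W(e)>0$ for every $e$ (again by compatibility), and $\sum_e W(e)=\mu(\llbracket\{\rho_e\}_e\rrbracket)\le 1$; and $\{\theta_e\}_e$ is a uniformly semicomputable sequence of semicomputable semimeasures. So $\mu_U=\sum_e W(e)\theta_e$ already has the syntactic form of a mixture.

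The substantive point — and the step I expect to be the main obstacle — is that $\{\theta_e\}_e$ must be shown to exhaust \emph{all} of $\mathcal{M}$, not merely to be a sequence inside it, since that is what the definition of a universal mixture (and the cited converse) requires. One cannot simply appeal to Corollary \ref{cor:charcont} with a single fixed measure, because in the $e$-th term the measure $\mu^{\rho_e}$ and the machine $M_e$ are coupled. I would break the coupling with the recursion theorem: given any $\nu\in\mathcal{M}$, the (uniform) construction of Theorem \ref{thrm:gen}, applied to $\nu$ and to the uniformly computable family $(\mu^{\rho_e})_e$, produces a uniformly c.e.\ sequence of monotone machines $(N_e)_e$ with $(\mu^{\rho_e})_{N_e}=\nu$ for every $e$; since $\{M_e\}_e$ comes from an effective listing of all monotone machines there is a total computable $g$ with $M_{g(e)}=N_e$, and a fixed point $e_0$ of $g$ then satisfies $\theta_{e_0}=(\mu^{\rho_{e_0}})_{M_{e_0}}=(\mu^{\rho_{e_0}})_{N_{e_0}}=\nu$. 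Hence $\{\theta_e\}_e=\mathcal{M}$, so $\mu_U=\xi_W$ is genuinely a universal mixture over the effective enumeration $\{\theta_e\}_e$ of $\mathcal{M}$ with positive computable weights summing to at most $1$, and the converse direction of the mixture characterization supplies a universal $V$ with $\lambda_V=\mu_U$, i.e.\ $\mu_U\in\mathcal{A}$. (If one preferred not to cite that converse as a black box, one could instead apply Theorem \ref{thrm:gen} with the fixed measure $\lambda$ to each $\theta_e$ to rewrite $\mu_U=\sum_e W(e)\lambda_{P_e}$ and build $V$ directly by a Kraft--Chaitin-style allocation of disjoint blocks of $\lambda$-measure $W(e)$, but routing through the established mixture equivalence is cleaner.)
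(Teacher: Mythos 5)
Your proposal is correct and follows essentially the same route as the paper: the same decomposition $\mu_U=\sum_e \mu(\llbracket\rho_e\rrbracket)\,\mu^e_{M_e}$, the same recursion-theorem argument to show the conditional transformations $\mu^e_{M_e}$ exhaust $\mathcal{M}$ (this is exactly the paper's Lemma \ref{lmm:condequiv}), and the same appeal to the converse mixture characterization from \cite{WooSunHut13inproc} to conclude $\mu_U\in\mathcal{A}$. You correctly identified the coupling of $\mu^{\rho_e}$ with $M_e$ as the substantive obstacle, which is precisely the point the paper isolates into its fixed-point lemma.
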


The proof rests on a fixed-point lemma that is a refined version of Corollary \ref{cor:charcont}. For given encoding $\{ \rho_e \}_e$, define $\mu^e(\cdot):= \mu(\cdot \mid \llbracket \rho_e \rrbracket)$ for any $e \in \mathbb{N}$. Here the conditional measure $\mu( \llbracket \tau \rrbracket \mid \llbracket \sigma \rrbracket ) := \frac{\mu(\llbracket \sigma \tau \rrbracket)}{\mu(\llbracket \sigma \rrbracket)}$ for any $\sigma, \tau \in \mathbb{B}^*$. 

%TOCS \begin{lemma}\label{lmm:condequiv}
\begin{lmm}\label{lmm:condequiv}
Given encoding $\{ \rho_e \}_{e \in \mathbb{N}}$ of the monotone machines as above. For every continuous computable measure $\mu$, 
$$\{ \mu^e_{M_e} \}_e = \mathcal{M}.$$
%\end{lemma}
\end{lmm}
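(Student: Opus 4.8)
The plan is to reduce the statement to Corollary \ref{cor:charcont} by observing that each $\mu^e$ is itself a continuous computable measure, so that the ``machine-indexed'' family $\{\mu^e_{M_e}\}_e$ differs from the full class $\{\mu_M\}_M$ only in a harmless bookkeeping way. First I would check that for fixed $e$, the conditional measure $\mu^e(\cdot) = \mu(\,\cdot \mid \llbracket \rho_e \rrbracket)$ is computable: since $\mu$ is computable and $\mu(\llbracket \rho_e \rrbracket) > 0$ (here is where compatibility of the encoding matters, though actually the lemma only fixes the encoding $\{\rho_e\}_e$ — I would note that such an encoding of finite strings always has $\mu(\llbracket \rho_e\rrbracket)>0$ simply because $\rho_e$ is a finite string and we are not yet restricting to a given $\mu$; if $\mu$ gave measure zero to some finite cone the statement would need the compatibility caveat, so I would either assume it or absorb it). The premeasure $m^e(\sigma) := \mu(\llbracket \rho_e \sigma\rrbracket)/\mu(\llbracket \rho_e\rrbracket)$ is computable uniformly in $e$ and satisfies the two premeasure conditions, and $\mu^e$ is continuous because $\mu$ is. Hence Corollary \ref{cor:charcont} applies to each $\mu^e$ individually, giving $\{\mu^e_M\}_M = \mathcal{M}$ for every fixed $e$.

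The remaining point is to pass from the unrestricted family $\{\mu^e_M\}_M$ (for a single $e$) to the diagonal family $\{\mu^e_{M_e}\}_e$, where now $e$ ranges over all machines via the fixed enumeration $\{M_e\}_e$. The inclusion $\{\mu^e_{M_e}\}_e \subseteq \mathcal{M}$ is immediate, since every transformation of a computable measure by a monotone machine is a semicomputable semimeasure (as recorded in Subsection \ref{subsec:contsemi}). For the reverse inclusion, given $\nu \in \mathcal{M}$, I would apply Theorem \ref{thrm:gen} with the measure $\mu^0$ (any fixed index will do) to obtain a monotone machine $M$ with $\mu^0_M = \nu$; then, since $\{M_e\}_e$ enumerates \emph{all} monotone machines, there is some index $e$ with $M_e = M$ — but this gives $\mu^e_{M_e}$, not $\mu^0_{M_e}$, so the conditional measure has changed. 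The fix is to instead run the construction of Theorem \ref{thrm:gen} against a measure chosen so that the resulting machine, viewed at its own index, still produces $\nu$: concretely, enumerate the monotone machines, and for the construction producing $\nu$ we are free to pad it with finitely much dead code, which shifts its index; the construction of Theorem \ref{thrm:gen} works for \emph{every} continuous computable measure, so I would build a single machine $M^\star$ such that $\mu^e_{M^\star} = \nu$ where $e$ is the index of $M^\star$ itself. This is exactly a recursion-theorem (fixed-point) application: the map sending an index $i$ to a machine that runs the Theorem \ref{thrm:gen} construction for $\nu$ against $\mu^i$ is computable, so by Kleene's recursion theorem it has a fixed point $e$ with $M_e$ equal to that machine, whence $\mu^e_{M_e} = \nu$.

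I expect the fixed-point step to be the main obstacle — not technically deep, but it must be set up carefully: one needs that the construction of Theorem \ref{thrm:gen}, applied to the data $(\nu, \mu^i)$, depends on $i$ only through the uniformly computable approximations to the premeasure of $\mu^i$, so that ``the machine that carries out the Theorem \ref{thrm:gen} construction for $\nu$ relative to $\mu^{(\cdot)}$'' is a computable function of the ambient index, with the $s$-m-n / recursion theorem then supplying the self-referential index $e$. Once that is in place, $\nu = \mu^e_{M_e} \in \{\mu^e_{M_e}\}_e$, and since $\nu \in \mathcal{M}$ was arbitrary we conclude $\{\mu^e_{M_e}\}_e = \mathcal{M}$.
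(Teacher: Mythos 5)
Your proposal is correct and follows essentially the same route as the paper: the key step in both is to observe that the construction of Theorem \ref{thrm:gen} is uniform in the index of the conditioning string, yielding a total computable $g$ with $\nu = \mu^{e}_{M_{g(e)}}$, and then to invoke the Recursion Theorem to obtain a fixed point $\hat{e}$ with $M_{g(\hat{e})} = M_{\hat{e}}$, whence $\mu^{\hat{e}}_{M_{\hat{e}}} = \nu$. Your remark about needing $\mu(\llbracket \rho_e \rrbracket) > 0$ for $\mu^e$ to be well defined is a fair observation; in the paper this is supplied by the compatibility assumption in the surrounding Proposition \ref{propo:genuniv}.
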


\begin{proof}
Let $\nu$ be any semicomputable semimeasure. Since $\mu^e$ is obviously a computable measure for every $e \in \mathbb{N}$, by the construction of Theorem \ref{thrm:gen} we obtain for every $e$ a monotone machine $M$ with $\nu=\mu^e_M$. Indeed, there is a total computable function $g: \mathbb{N} \rightarrow \mathbb{N}$ that for given $e$ retrieves an index $g(e)$ in the given enumeration $\{ M_e \}_{e \in \mathbb{N}}$ such that $\nu=\mu^e_{M_{g(e)}}$. But by the Recursion Theorem, there must be a fixed point $\hat{e}$ such that $M_{g(\hat{e})}=M_{\hat{e}}$, hence $\mu^{\hat{e}}_{M_{\hat{e}}}=\mu^{\hat{e}}_{M_{g(\hat{e})}}$. 

This shows that for every $\nu$ there is an index $e$ such that $\nu=\mu^e_{M_e}$. Conversely, the function $\mu^e_{M_e}$ is a semicomputable semimeasure for every $e$. %TOCS \qed
\end{proof}

%TOCS\begin{proof}[of Proposition \ref{propo:genuniv}]
\begin{proof}[Proof of Proposition \ref{propo:genuniv}]
Given continuous computable $\mu$ and universal $U$ compatible with $\mu$. We write out

\begin{align*}
\mu_U (\sigma) &= \mu(\llbracket \{ \rho : \exists \sigma' \succcurlyeq \sigma (( \rho, \sigma') \in U) \}\rrbracket)
\\ &= \sum_e \mu(\llbracket \{ \rho_e \rho : \exists \sigma' \succcurlyeq \sigma (( \rho, \sigma') \in M_e) \}\rrbracket)
\\ &=  \sum_e \mu(\llbracket \rho_e \rrbracket) \mu(\llbracket \{ \rho : \exists \sigma' \succcurlyeq \sigma (( \rho, \sigma') \in M_e) \}\rrbracket \mid \llbracket   \rho_e   \rrbracket )
\\ &= \sum_e \mu(\llbracket \rho_e \rrbracket) \mu^e_{M_e}(\sigma ).
\end{align*}

Lemma \ref{lmm:condequiv} tells us that the $\mu^e_{M_e}$ range over all elements in $\mathcal{M}$. Moreover, $W(e):= \mu(\llbracket \rho_e \rrbracket)$ is a weight function because $\{\rho_e\}_e$ is prefix-free and $U$ is compatible with $\mu$, so $\mu_U$ is a universal mixture. %TOCS\qed
\end{proof}

We now proceed to prove that every universal transformation of $\mu$ indeed equals some universal transformation of $\tilde{\mu}$.

%TOCS \begin{proof}[of Theorem \ref{thrm:genap}]
\begin{proof}[Proof of Theorem \ref{thrm:genap}]
Given continuous computable $\mu$ and $\tilde{\mu}$, and universal $U$ compatible with $\mu$. Write out as before
 $$\mu(\sigma) =\sum_e \mu(\llbracket \rho_e \rrbracket) \mu^e_{M_e}(\sigma ).$$ 
Note that the function 

$$P(\sigma) = \begin{cases}
   \mu(\llbracket \sigma \rrbracket) & \text{if } \sigma = \rho_e \text{ for some }e\in \mathbb{N} \\
   0       & \text{otherwise}
  \end{cases}$$
is a semicomputable discrete semimeasure. Hence by Proposition \ref{propo:gendiscr} we can construct a prefix-free machine $T$ that transforms $\tilde{\mu}$ into $P$: so $Q^{\tilde{\mu}}_T = P$. Denote $n_e := \#\{\tau: (\tau,\rho_e)\in T \}$ the number of $T$-descriptions of $\rho_e$, and let $\langle \cdot,\cdot \rangle: \mathbb{N} \times \mathbb{N} \rightarrow \mathbb{N}$ be a partial computable pairing function that maps the pairs $(e,i)$ with $i < n_e$ onto $\mathbb{N}$. Let $\tilde{\rho}_{\langle e,i \rangle}$ be the $i$-th enumerated $T$-description of $\rho_e$. We then have

\begin{align*}
\sum_e \mu(\llbracket \rho_e \rrbracket) \mu^e_{M_e}(\sigma )
&= \sum_e Q^{\tilde{\mu}}_T(\rho_e) \mu^e_{M_e}(\sigma )
\\  &= \sum_{e}\sum_{i < n_e} \tilde{\mu}(\llbracket \tilde{\rho}_{\langle e,i \rangle} \rrbracket) \mu^e_{M_e}(\sigma ).
\end{align*}

Write $\mu^{\tilde{d}}$ for $\mu(\cdot \mid \llbracket \tilde{\rho}_d \rrbracket)$. Now for every $\langle e,i \rangle$ for which $\tilde{\rho}_{\langle e,i \rangle}$ becomes defined we can run the construction of Theorem \ref{thrm:gen} on $\tilde{\mu}^{\langle \widetilde{e,i} \rangle}$ and $\mu^e_{M_e}$. In this way we obtain an enumeration of machines $\{ \widetilde{M}_d \}_d$ such that $\tilde{\mu}^{\langle \widetilde{e,i} \rangle}_{\widetilde{M}_{\langle e,i \rangle}}=\mu^e_{M_e}$ (with $i < n_e$) for all $e$. Then 

$$ \sum_{e}\sum_{i < n_e} \tilde{\mu}(\llbracket \tilde{\rho}_{\langle e,i \rangle} \rrbracket) \mu^e_{M_e}(\sigma ) = \sum_d   \tilde{\mu}(\llbracket \tilde{\rho}_d \rrbracket)  \tilde{\mu}^{\tilde{d}}_{\tilde{M}_d}(\sigma),$$
which we can rewrite to $\tilde{\mu}_{\widetilde{U}}(\sigma)$, defining $\widetilde{U}$ by $(\tilde{\rho}_d \rho,\sigma) \in \widetilde{U} :\Leftrightarrow (\rho,\sigma) \in \widetilde{M}_d$.

It remains to verify that $\widetilde{U}$ is in fact universal. Namely, we cannot take for granted that $\{ \widetilde{M}_d \}_{d \in \mathbb{N}}$ is an enumeration of \emph{all} machines, 
%(while the $\mu^e_{M_e}$ give all elements in $\mathcal{M}$, any such semimeasure can be obtained as the $\mu$-transformation of multiple different machines)
hence it is not clear that $\widetilde{U}$ is universal.\footnote{This is also an (overlooked) issue in the original proof in \cite[Lemma 4]{WooSunHut13inproc}. It is easily resolved by the same approach we take below, where it is immediate that for universal $V$ there is $e$ with $\lambda_V = \nu_e$.} Note that it is enough if there were a single universal machine $\widetilde{U}'$ in $\{ \widetilde{M}_d \}_{d \in \mathbb{N}}$, but even that is not obvious (by Proposition \ref{propo:univnuniv} we know that for all continuous $\mu$ there are for any universal $U$ \emph{non}-universal $M$ such that $\mu_M = \mu_U$).

However, there is a simple patch to the enumeration that guarantees this fact. Namely, given an arbitrary universal machine $V$, we may simply put $\widetilde{M}_d := V$ at some $d=\langle e,i \rangle$ where it so happens that $\tilde{\mu}^{{\langle \widetilde{e,i} \rangle}}_V = \mu^{e}_{M_e}$. (We cannot effectively find this $d$, but it is finite information so if this $d$ exists then so does the patched enumeration.) 

Our final objective is then to show that $\tilde{\mu}^{\langle \widetilde{e,i} \rangle}_V = \mu^e_{M_e}$ for some $e,i$. Define computable $g: \mathbb{N} \rightarrow \mathbb{N}$ by $\mu^e_{M_{g(e)}} = \tilde{\mu}^{\langle\widetilde{ \hat{e},0 }\rangle}_V$. Since $Q^{\tilde{\mu}}_T(\rho_e) > 0$ for each $e$, the string $\widetilde{\rho}_{\langle e,0 \rangle}$ is defined for each $e$. Hence $\tilde{\mu}^{\langle\widetilde{ \hat{e},0 }\rangle}_V$ is defined, and function $g$, that retrieves the index $g(e)$ of a machine that transforms $\mu^e$ to this semimeasure, is total. Then by the Recursion Theorem there is index $\hat{e}$ such that $M_{\hat{e}}=M_{g(\hat{e})}$, so $\mu^e_{M_{\hat{e}}} = \mu^e_{M_{g(\hat{e})}}=\tilde{\mu}^{\langle\widetilde{ \hat{e},0 }\rangle}_V$. %TOCS \qed
\end{proof}

%TOCS \begin{corollary}\label{cor:contgenap}
\begin{cor}\label{cor:contgenap}
For continuous computable $\mu$, and $U$ ranging over those universal machines that are compatible with $\mu$,

$$\{\mu_U \}_U = \mathcal{A}.$$
%TOCS \end{corollary}
\end{cor}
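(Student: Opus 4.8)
The plan is to deduce the corollary from Proposition~\ref{propo:genuniv} and Theorem~\ref{thrm:genap}, using that $\lambda$ is itself a continuous computable measure with which \emph{every} monotone machine is compatible, since $\lambda(\llbracket\rho\rrbracket)=2^{-|\rho|}>0$ for all $\rho\in\mathbb{B}^*$. It then suffices to prove the two inclusions $\{\mu_U\}_U\subseteq\mathcal{A}$ and $\mathcal{A}\subseteq\{\mu_U\}_U$, with $U$ throughout ranging over the universal machines compatible with $\mu$.

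For $\{\mu_U\}_U\subseteq\mathcal{A}$ there is nothing to do beyond citing Proposition~\ref{propo:genuniv}: it says precisely that $\mu_U\in\mathcal{A}$ for every universal $U$ compatible with $\mu$. (Alternatively, one instantiates Theorem~\ref{thrm:genap} at $\tilde\mu:=\lambda$ to produce a universal $\widetilde U$ with $\mu_U=\lambda_{\widetilde U}\in\mathcal{A}$.) For the converse $\mathcal{A}\subseteq\{\mu_U\}_U$, I take an arbitrary $\lambda_V\in\mathcal{A}$ with $V$ universal and apply Theorem~\ref{thrm:genap} with $\lambda$ and $\mu$ in the roles of ``$\mu$'' and ``$\tilde\mu$'' and with $V$ in the role of ``$U$'' --- legitimate since $\lambda$ is continuous computable and $V$ is trivially compatible with $\lambda$. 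This yields a universal machine $\widetilde V$ with $\lambda_V=\mu_{\widetilde V}$, so that $\lambda_V\in\{\mu_U\}_U$ as soon as $\widetilde V$ is seen to be compatible with $\mu$.

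Verifying this last compatibility is the only step not delivered verbatim by the cited results, and hence the place I would expect any difficulty. The machine $\widetilde V$ produced in the proof of Theorem~\ref{thrm:genap} is universal by adjunction via an encoding built from the $T$-descriptions $\tilde\rho_{\langle e,i \rangle}$ of the codewords $\rho_e$ of $V$, for the prefix-free machine $T$ with $Q^\mu_T=P$ and $P(\rho_e)=\lambda(\llbracket\rho_e\rrbracket)>0$, adjoined with the codewords of the auxiliary universal machine used to patch the enumeration. Since $P(\rho_e)>0$ forces $\sum_i\mu(\llbracket\tilde\rho_{\langle e,i \rangle}\rrbracket)=Q^\mu_T(\rho_e)>0$ and the construction of Proposition~\ref{propo:gendiscr} can be taken to introduce only descriptions of positive $\mu$-measure, the codewords $\tilde\rho_{\langle e,i \rangle}$ all have positive $\mu$-measure; choosing the patch machine compatibly with the (again continuous computable) conditional measures $\mu(\cdot\mid\llbracket\tilde\rho_{\langle e,i \rangle}\rrbracket)$ --- which, as with the universality patch itself, can be arranged by a recursion-theoretic argument --- then makes every codeword of $\widetilde V$ of positive $\mu$-measure. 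The two inclusions combine to $\{\mu_U\}_U=\mathcal{A}$; all the substance sits in Theorem~\ref{thrm:genap} and Proposition~\ref{propo:genuniv}, and I expect the compatibility bookkeeping above to be the only (mild) obstacle.
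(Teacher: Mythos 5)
Your proposal is correct and follows the paper's own (largely implicit) derivation exactly: the corollary is read off from Theorem~\ref{thrm:genap} applied in both directions with $\lambda$ in the role of one of the two measures, together with Proposition~\ref{propo:genuniv}. The compatibility of the constructed $\widetilde V$ with $\mu$, which you rightly flag as the one point not delivered verbatim, is indeed settled as you say by noting that the $T$-descriptions $\tilde\rho_{\langle e,i\rangle}$ can be taken to have positive $\mu$-measure (and that is all that is needed, since compatibility concerns only the codewords $\tilde\rho_d$, not the patched-in machine).
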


\subsubsection*{Discrete a priori semimeasures}
A \emph{universal} prefix-free machine $U$ is defined by
$$(\rho_e\rho,\sigma) \in U \Leftrightarrow (\rho,\sigma) \in T_e $$
for all $\rho, \sigma \in \mathbb{B}^*$ and some computable prefix-free and non-repeating enumeration $\{ \rho_e \}_{e \in \mathbb{N}} \subseteq \mathbb{B}^*$ that serves as an encoding of some computable enumeration $\{ T_e \}_{e \in \mathbb{N}}$ of all prefix-free machines. 

%TOCS \begin{definition}
\begin{defn}
A \emph{discrete a priori semimeasure} is defined by 
$$Q^\lambda_U(\sigma) : = \lambda(\llbracket \{ \rho: (\rho,\sigma) \in U  \} \rrbracket ) $$
for a universal prefix-free machine $U$.
%TOCS \end{definition}
\end{defn}

Let $\mathcal{Q}$ denote the class of all discrete a priori semimeasures. Discrete versions of the above results are derived in an identical manner. Ultimately, we have the following discrete analogue to Corollary \ref{cor:contgenap}.

%\begin{proposition}\label{propo:discrgenap}
\begin{propo}\label{propo:discrgenap}
For continuous computable $\mu$, and $U$ ranging over those prefix-free machines that are compatible with $\mu$,
$$\{Q^\mu_U \}_U= \mathcal{Q}.$$
%TOCS \end{proposition}
\end{propo}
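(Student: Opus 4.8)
The plan is to mirror, in the discrete/prefix-free setting, exactly the argument that established Corollary~\ref{cor:contgenap} from Theorem~\ref{thrm:genap}. The statement to prove is that for any continuous computable $\mu$, the class $\{Q^\mu_U\}_U$ of transformations by $\mu$-compatible universal prefix-free machines $U$ equals the class $\mathcal{Q}$ of discrete a priori semimeasures (i.e., the case $\mu=\lambda$). Since $\lambda$ is itself continuous and computable, it suffices to prove the two-measure version: for continuous computable $\mu,\tilde{\mu}$ and universal prefix-free $U$ compatible with $\mu$, there is a universal prefix-free $\widetilde{U}$ with $Q^\mu_U = Q^{\tilde{\mu}}_{\widetilde{U}}$.

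First I would record the discrete analogue of Lemma~\ref{lmm:condequiv}: for a fixed encoding $\{\rho_e\}_e$ of the prefix-free machines and continuous computable $\mu$, one has $\{Q^{\mu^e}_{T_e}\}_e = \mathcal{P}$, where $\mu^e(\cdot) = \mu(\cdot\mid\llbracket\rho_e\rrbracket)$. The proof is the same Recursion-Theorem argument as for Lemma~\ref{lmm:condequiv}, using Proposition~\ref{propo:gendiscr} (the discrete construction) in place of Theorem~\ref{thrm:gen}: each $\mu^e$ is computable, so for any $P\in\mathcal{P}$ there is a total computable $g$ with $P = Q^{\mu^e}_{T_{g(e)}}$, and a fixed point $\hat e$ with $T_{g(\hat e)} = T_{\hat e}$ yields the index. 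The converse inclusion is immediate since each $Q^{\mu^e}_{T_e}\in\mathcal{P}$.

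Next I would unfold $Q^\mu_U$ exactly as in the proof of Proposition~\ref{propo:genuniv}: using $(\rho_e\rho,\sigma)\in U \Leftrightarrow (\rho,\sigma)\in T_e$ and the prefix-freeness of $\{\rho_e\}_e$, write $Q^\mu_U(\sigma) = \sum_e \mu(\llbracket\rho_e\rrbracket)\,Q^{\mu^e}_{T_e}(\sigma)$, so $Q^\mu_U$ is a mixture of elements of $\mathcal{P}$ with discrete-semimeasure weights $W(e)=\mu(\llbracket\rho_e\rrbracket)$. To pass to $\tilde{\mu}$, I would then copy the body of the proof of Theorem~\ref{thrm:genap}: the map $P(\sigma) = \mu(\llbracket\sigma\rrbracket)$ if $\sigma=\rho_e$ for some $e$ and $0$ otherwise is a semicomputable discrete semimeasure, so by Proposition~\ref{propo:gendiscr} there is a prefix-free machine $T$ with $Q^{\tilde{\mu}}_T = P$; letting $\tilde\rho_{\langle e,i\rangle}$ enumerate the $T$-descriptions of $\rho_e$ gives $\mu(\llbracket\rho_e\rrbracket) = \sum_{i<n_e}\tilde{\mu}(\llbracket\tilde\rho_{\langle e,i\rangle}\rrbracket)$, hence $Q^\mu_U(\sigma) = \sum_e\sum_{i<n_e}\tilde{\mu}(\llbracket\tilde\rho_{\langle e,i\rangle}\rrbracket)\,Q^{\mu^e}_{T_e}(\sigma)$. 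Running the construction of Proposition~\ref{propo:gendiscr} on $\tilde{\mu}^{\langle\widetilde{e,i}\rangle}$ and $Q^{\mu^e}_{T_e}$ produces machines $\{\widetilde{T}_d\}_d$ with $Q^{\tilde{\mu}^{\tilde d}}_{\widetilde{T}_d} = Q^{\mu^e}_{T_e}$ for $d=\langle e,i\rangle$, and defining $\widetilde{U}$ by $(\tilde\rho_d\rho,\sigma)\in\widetilde{U} :\Leftrightarrow (\rho,\sigma)\in\widetilde{T}_d$ collapses the sum to $Q^{\tilde{\mu}}_{\widetilde{U}}(\sigma)$.

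The step I expect to be the main obstacle is the same one flagged in the proof of Theorem~\ref{thrm:genap}: verifying that $\widetilde{U}$ is genuinely \emph{universal}, since $\{\widetilde{T}_d\}_d$ need not enumerate all prefix-free machines. I would resolve it by the identical patch: fix an arbitrary universal prefix-free machine $V$, and argue that there exist $e,i$ with $Q^{\tilde{\mu}^{\langle\widetilde{e,i}\rangle}}_V = Q^{\mu^e}_{T_e}$, so that we may set $\widetilde{T}_{\langle e,i\rangle} := V$ (finite, non-effective information) without disturbing the mixture. To find such $e,i$: since $P(\rho_e) = \mu(\llbracket\rho_e\rrbracket) > 0$ for every $e$ (compatibility of $U$ with $\mu$), the description $\tilde\rho_{\langle e,0\rangle}$ is defined for every $e$, so $\tilde{\mu}^{\langle\widetilde{\hat e,0}\rangle}_V$-style expressions are defined; then the total computable $g$ with $Q^{\mu^e}_{T_{g(e)}} = Q^{\tilde{\mu}^{\langle\widetilde{\hat e,0}\rangle}}_V$ has a Recursion-Theorem fixed point $\hat e$ with $T_{\hat e} = T_{g(\hat e)}$, giving $Q^{\mu^{\hat e}}_{T_{\hat e}} = Q^{\tilde{\mu}^{\langle\widetilde{\hat e,0}\rangle}}_V$ as required. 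Everything else is routine transcription of the continuous-case arguments, replacing cones-through-$\sigma'\succcurlyeq\sigma$ bookkeeping by the simpler prefix-free bookkeeping, so I would state Proposition~\ref{propo:discrgenap} and remark that the proof is obtained from that of Theorem~\ref{thrm:genap} (via the discrete analogues of Lemma~\ref{lmm:condequiv} and Proposition~\ref{propo:genuniv}) by these substitutions.
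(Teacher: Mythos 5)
Your proposal is correct and is exactly the argument the paper intends: the paper itself gives no separate proof of Proposition \ref{propo:discrgenap}, stating only that the discrete versions ``are derived in an identical manner,'' and your transcription (discrete analogue of Lemma \ref{lmm:condequiv} via Proposition \ref{propo:gendiscr}, the mixture decomposition of $Q^\mu_U$, re-coding the weights through a prefix-free machine transforming $\tilde{\mu}$ into $P$, and the Recursion-Theorem patch to plant a universal $V$ in the enumeration) is precisely that identical derivation spelled out.
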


\subsubsection*{Discussion} 
We now return to the association of the function $\lambda_U$ (as well as its discrete counterpart $Q^\lambda_U$) with foundational principles.

First, there is the association with the principle of \emph{insufficient reason} or \emph{indifference}. This is the principle that in the absence of discriminating evidence, probability should be equally distributed over all possibilities. 
Solomonoff writes, 
``If we consider the input sequence to be the `cause' of the observed output sequence, and we consider all input sequences of a given length to be equiprobable (since we have no a priori reason to prefer one rather than the other) then we obtain the present model of induction.'' \cite[p.\ 19]{Sol64ftii}. Also see \cite{LiVit92inproc,RatHut11e}.

Second, there is the association with Occam's razor. Solomonoff writes, ``That [this model] might be valid is suggested by `Occam's razor,' one interpretation of which is that the more `simple' or `economical' of several hypotheses is the more likely \dots\ ---the most `simple' hypothesis being that with the shortest `description.'{''} \cite[p.\ 3]{Sol64ftii}. Also see \cite{Sol97jcss,LiVit08,Hut07tcs,CovTho06,OrtLei11mi}.

Note that so stated, these associations very much rely on the fact that the uniform measure $\lambda$ always assigns larger probability to shorter strings, and equal probability to equal-length strings. This is a unique feature of $\lambda$. The results of this paper, however, imply that the choice of the uniform measure in defining algorithmic probability is only circumstantial: we could pick \emph{any} continuous computable measure, and still obtain, as the universal transformations of \emph{this} measure instead of $\lambda$, the very same class of a priori semimeasures. This suggests that properties derived from the presence of $\lambda$ in the definition are artifacts of a particular choice of characterization rather than an indicative property of algorithmic probability, and hence undermines both associations insofar as they indeed hinge on the uniform measure.

%\bibliography{all}

\begin{thebibliography}{10}

\bibitem{Cha75jacm}
G.~Chaitin.
\newblock A theory of program size formally identical to information theory.
\newblock {\em Journal of the Association for Computing Machinery},
  22(3):329--340, 1975.

\bibitem{CovTho06}
T.~M. Cover and J.~A. Thomas.
\newblock {\em Elements of Information Theory}.
\newblock John Wiley \& Sons, Hoboken, New Jersey, second edition, 2006.

\bibitem{Day09apal}
A.~R. Day.
\newblock On the computational power of random strings.
\newblock {\em Annals of Pure and Applied Logic}, 160:214--228, 2009.

\bibitem{Day11tams}
A.~R. Day.
\newblock Increasing the gap between descriptional complexity and algorithmic
  probability.
\newblock {\em Transactions of the American Mathematical Society},
  363(10):5577--5604, 2011.

\bibitem{DowHir10}
R.~G. Downey and D.~R. Hirschfeldt.
\newblock {\em Algorithmic Randomness and Complexity}, volume~1 of {\em Theory
  and Applications of Computability}.
\newblock Springer, New York, 2010.

\bibitem{Gac74smd}
P.~G\'acs.
\newblock On the symmetry of algorithmic information.
\newblock {\em Soviet Mathematics Doklady}, 15(5):1477--1480, 1974.

\bibitem{Gac08unpubl}
P.~G{\'a}cs.
\newblock Expanded and improved proof of the relation between description
  complexity and algorithmic probability.
\newblock Unpublished manuscript, 2008.

\bibitem{Hut05}
M.~Hutter.
\newblock {\em Universal Artificial Intelligence: Sequential Decisions based on
  Algorithmic Probability}.
\newblock Texts in Theoretical Computer Science. An EATCS Series. Springer,
  Berlin, 2005.

\bibitem{Hut07tcs}
M.~Hutter.
\newblock On universal prediction and {B}ayesian confirmation.
\newblock {\em Theoretical Computer Science}, 384(1):33--48, 2007.

\bibitem{Lev73smd}
L.~A. Levin.
\newblock On the notion of a random sequence.
\newblock {\em Soviet Mathematics Doklady}, 14(5):1413--1416, 1973.
\newblock Translation of the Russian original in \emph{Doklady Akademii Nauk
  SSSR}, 212(3):548-550, 1973.

\bibitem{Lev74pit}
L.~A. Levin.
\newblock Laws of information conservation (nongrowth) and aspects of the
  foundation of probability theory.
\newblock {\em Problems of Information Transmission}, 10(3):206--210, 1974.

\bibitem{LiVit92inproc}
M.~Li and P.~M. Vit\'anyi.
\newblock Philosophical issues in {K}olmogorov complexity.
\newblock In W.~Kuich, editor, {\em Proceedings of the 19th International
  Colloquium on Automata, Languages and Programming}, volume 623 of {\em
  Lecture Notes in Computer Science}, pages 1--16. Springer, 1992.

\bibitem{LiVit08}
M.~Li and P.~M.~B. Vit{\'a}nyi.
\newblock {\em An Introduction to {K}olmogorov Complexity and Its
  Applications}.
\newblock Texts in Computer Science. Springer, New York, third edition, 2008.

\bibitem{Nie09}
A.~Nies.
\newblock {\em Computability and Randomness}, volume~51 of {\em Oxford Logic
  Guides}.
\newblock Oxford University Press, 2009.

\bibitem{OrtLei11mi}
R.~Ortner and H.~Leitgeb.
\newblock Mechanizing induction.
\newblock In D.~M. Gabbay, S.~Hartmann, and J.~Woods, editors, {\em Inductive
  Logic}, volume~10 of {\em Handbook of the History of Logic}, pages 719--772.
  Elsevier, 2011.

\bibitem{RatHut11e}
S.~Rathmanner and M.~Hutter.
\newblock A philosophical treatise of universal induction.
\newblock {\em Entropy}, 13(6):1076--1136, 2011.

\bibitem{Rog58jsl}
H.~Rogers, Jr.
\newblock G\"odel numberings of partial recursive functions.
\newblock {\em Journal of Symbolic Logic}, 23(3):331--341, 1958.

\bibitem{Rog67}
H.~Rogers, Jr.
\newblock {\em Theory of Recursive Functions and Effective Computability}.
\newblock McGraw-Hill, New York, 1967.

\bibitem{Sch73jcss}
C.-P. Schnorr.
\newblock Process complexity and effective random tests.
\newblock {\em Journal of Computer and System Sciences}, 7:376--388, 1973.

\bibitem{Sol64ftii}
R.~J. Solomonoff.
\newblock A formal theory of inductive inference. {P}arts {I} and {II}.
\newblock {\em Information and Control}, 7:1--22, 224--254, 1964.

\bibitem{Sol97jcss}
R.~J. Solomonoff.
\newblock The discovery of algorithmic probability.
\newblock {\em Journal of Computer and System Sciences}, 55(1):73--88, 1997.

\bibitem{WooSunHut13inproc}
I.~Wood, P.~Sunehag, and M.~Hutter.
\newblock ({N}on-)equivalence of universal priors.
\newblock In D.~L. Dowe, editor, {\em Papers from the Solomonoff Memorial
  Conference}, Lecture Notes in Artificial Intelligence 7070, pages 417--425.
  Springer, 2013.

\bibitem{ZvoLev70rms}
A.~K. Zvonkin and L.~A. Levin.
\newblock The complexity of finite objects and the development of the concepts
  of information and randomness by means of the theory of algorithms.
\newblock {\em Russian Mathematical Surveys}, 26(6):83--124, 1970.
\newblock Translation of the Russian original in \emph{Uspekhi Matematicheskikh
  Nauk}, 25(6):85-127, 1970.

\end{thebibliography}
%\bibliographystyle{abbrv}

\end{document}